\documentclass[11pt]{article}
\usepackage{amsmath,amssymb,amsthm}
\usepackage{fullpage}
\usepackage{graphicx}
\newtheorem{prop}{Proposition}[section]
\newtheorem{theorem}[prop]{Theorem}

\newtheorem{corollary}[prop]{Corollary}

\newtheorem{lemma}[prop]{Lemma}
\newtheorem{proposition}[prop]{Proposition}

\newtheorem*{defn*}{Definition}
\theoremstyle{definition}
\newtheorem{remark}{Remark}

\title{On the Imaginary Part of \(\zeta'/\zeta\)}

\author{Farzad Aryan}
\date{}

\begin{document}
\maketitle
\begin{abstract}

We derive an explicit zero-side formula for the imaginary part of a smooth truncation of the logarithmic derivative of the Riemann zeta-function. The resulting kernel is the odd companion of the Fejér kernel in Montgomery's pair-correlation identity. Motivated by the recent breakthrough of Anthropic's Claude \cite{Claude26}, we also give an expository account of how the real-part identity can be turned into a matrix problem, overcoming the clustering obstruction for zeros off the critical line. Finally, we discuss how the new imaginary-part identity may contain additional information about such clustering.
\end{abstract}
\section{Introduction}

Montgomery's famous pair-correlation~\cite{Montgomery} identity may be viewed as an average of
the real part of a smoothly truncated logarithmic derivative of the Riemann
zeta-function. More precisely, let
\[
Z(s):=-\frac{2}{\log T}\sum_{n<T}\frac{\Lambda(n)}{n^s}
\left(1-\frac{\log n}{\log T}\right),
\qquad
\mathcal C(t):=\Re Z\!\left(\frac12+it\right).
\]
Under the Riemann Hypothesis, \(\mathcal C(t)\) admits the zero-side
representation
\begin{equation}
\label{C_t_zero}
\mathcal C(t)
=
-1+
\sum_{\gamma'}
\left(
\frac{\sin\!\big(\tfrac12(\gamma'-t)\log T\big)}
{\tfrac12(\gamma'-t)\log T}
\right)^2
+
O\!\left(\frac{1}{\log T}\right).
\end{equation}

This gives a useful interpretation of \(\mathcal C(t)\). Up to the constant
\(-1\), it is a smoothed count of the zeros lying within approximately
\(1/\log T\) of \(t\). In particular, if \(t=\gamma\) is the ordinate of a
multiple zero, every copy of that zero contributes \(1\) to the sum. Thus a
zero of multiplicity \(m\) contributes \(m-1\) to \(\mathcal C(\gamma)\)
before the contribution of nearby zeros is even taken into account. This
makes \(\mathcal C\) a natural quantity for studying both multiplicities and
the local spacing of zeros.

Averaging over the ordinates of the zeros gives Montgomery's pair-correlation
identity,
\begin{equation}
\label{C_t}
\frac{1}{N(T)}\sum_{0<\gamma'\leq T}\mathcal C(\gamma')
\sim \frac13.
\end{equation}
One of the main consequences of \eqref{C_t} is, assuming RH, at least
two-thirds of the zeros are simple. 

In \cite{Ary22}, the author proved an unconditional analogue of this smoothed
identity, allowing the zeros to move away from the critical line. When zeros
are allowed to move off the critical line, however, the zero-side picture
changes substantially. The quantities corresponding to the Fej\'er-kernel
terms, in \eqref{C_t_zero}, are now evaluated at complex arguments. Individual contributions can
therefore be complex and can become very large when a zero moves away from the
critical line. The positivity used in Montgomery's argument is lost, and
clustering among nearby zeros can produce substantial cancellation.

It was therefore natural to ask whether Montgomery's \(2/3\) result could
nevertheless be recovered unconditionally. This clustering obstruction was
recently overcome in \cite{Claude26}. Building on the unconditional
pair-correlation framework of \cite{Ary22} and its subsequent developments
\cite{BGSTB24,BGSTB25,GS25,GS26}, Claude replaced the termwise positivity
argument by a global linear-algebraic analysis.\footnote{A shorter proof was subsequently given by Lamzouri \cite{Lamzouri26}, replacing the finite-dimensional matrix framework by a Hilbert-space inequality.} This yields an unconditional
result that more than two-thirds of the zeros are simple and lie on the
critical line. We explain the argument and the role of clustering in section~\ref{Expo}.

\subsection{The imaginary-part analogue}

The present paper is motivated by a related question. The prime-side definition of
\(Z(s)\) gives its real and imaginary parts immediately. The difficulty is to
find a corresponding formula on the zero side for
\[
\Im Z\!\left(\frac12+it\right).
\]

This question first arose during the work leading to \cite{Ary22}. A further motivation came from the study of higher moments of the distribution of zeta zeros \cite{Ary20HigherMoments}, where the imaginary part appears naturally. However, the argument that produces the zero-side formula for \(\mathcal C(t)\) does not appear to adapt directly to the imaginary part, and at the time no satisfactory analogue was found.

More recently, the author found that a combinatorial identity gives a
different route to the problem. This leads to an explicit zero-side formula
for the imaginary part of the same smoothly truncated logarithmic derivative.

The recent breakthrough \cite{Claude26} makes this identity particularly
timely. It shows that the real-part identity contains considerably more
information about the zeros than could be extracted by the classical
positivity argument alone. It therefore seems important to develop the corresponding imaginary-part identity and make it available for the study of multiplicities, spacing, and higher moments of the zeros.\footnote{While this paper was being completed, OpenAI released an AI-generated proof of the quasi-Riemann hypothesis, establishing the zero-free half-plane \(\operatorname{Re}(s)>7/8\); the proof is accompanied by a formalization in Lean \cite{OpenAIQRH26}.}

\section{An Identity for the Imaginary Part}
We now state the identity for the imaginary part as a theorem.
\begin{theorem}[Imaginary-part kernel identity]
\label{thm:ImZ-limit-kernel}
Let \(T>e\), put \(L=\log T\), and let \(t\asymp T\).  Define
\[
A_\infty(z)
:=
2\int_0^1(1-u)\sin(uz)\,du
=
\frac{2(z-\sin z)}{z^2},
\qquad A_\infty(0)=0.
\]
Then
\[
\boxed{
\Im Z\!\left(\frac12+it\right)
=
\Re\sum_{\rho}
A_\infty\!\left(
-iL\left(\rho-\frac12-it\right)
\right)
+
O\!\left(\frac1L\right),
}
\]
where the sum is over the non-trivial zeros of \(\zeta\), counted with
multiplicity and taken symmetrically.

In particular, under the Riemann Hypothesis this reduces to
\[
\Im Z\!\left(\frac12+it\right)
=
\sum_{\gamma}
A_\infty\!\bigl((\gamma-t)L\bigr)
+
O\!\left(\frac1L\right).
\]
\end{theorem}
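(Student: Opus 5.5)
We prove the identity with the standard explicit-formula (Mellin/contour) approach, applied to the full complex quantity $Z(\tfrac12+it)$; the imaginary part is taken only at the end. Write $s_0=\tfrac12+it$ and $L=\log T$. The smooth weight has the Mellin representation
\[
\sum_{n<T}\frac{\Lambda(n)}{n^{s_0}}\Bigl(1-\frac{\log n}{L}\Bigr)
=\frac{1}{2\pi i}\int_{(c)}\Bigl(-\frac{\zeta'}{\zeta}(s_0+w)\Bigr)\frac{T^{w}}{L\,w^2}\,dw ,
\qquad c>\tfrac12 .
\]
Shift the contour to the far left and collect residues. The pole of $\zeta$ at $s=1$ contributes a term of size $T^{1/2-it}/(L\,t^2)$, which is negligible for $t\asymp T$. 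The trivial zeros contribute a term of size $O(T^{-5/2}/L)$, also negligible. The double pole at $w=0$ contributes
\[
-\frac{\zeta'}{\zeta}(s_0)-\frac{1}{L}\Bigl(\frac{\zeta'}{\zeta}\Bigr)'(s_0).
\]
Finally, each zero $\rho$ contributes $T^{\rho-s_0}/(L(\rho-s_0)^2)$. Multiplying by $-2/L$ and writing $z_\rho:=-iL(\rho-s_0)$, so that $T^{\rho-s_0}=e^{iz_\rho}$ and $L^2(\rho-s_0)^2=-z_\rho^2$, we obtain
\[
Z(s_0)=\frac{2}{L}\frac{\zeta'}{\zeta}(s_0)+\frac{2}{L^2}\Bigl(\frac{\zeta'}{\zeta}\Bigr)'(s_0)+\sum_\rho\frac{2e^{iz_\rho}}{z_\rho^2}+\text{negligible}.
\]
The key algebraic point is that $2e^{iz}/z^2=\frac{2\cos z-2}{z^2}+A_\infty(z)+\frac{2}{z^2}-\frac{2i}{z}$, up to a rearrangement that isolates $A_\infty(z)=2(z-\sin z)/z^2$. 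Equivalently, one checks directly that $\int_0^1(1-u)e^{iuz}\,du=\frac{1}{iz}+\frac{1-e^{iz}}{z^2}$.

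The second step substitutes the partial-fraction (Hadamard) expansions. We use
\[
\frac{\zeta'}{\zeta}(s_0)=\sum_\rho\frac{1}{s_0-\rho}-\tfrac12\log\frac{t}{2\pi}+O(1/t),
\qquad
\Bigl(\frac{\zeta'}{\zeta}\Bigr)'(s_0)=-\sum_\rho\frac{1}{(s_0-\rho)^2}+O(1/t),
\]
with the first sum taken symmetrically. In the variable $z_\rho$ these become $\frac{2}{L}\cdot\frac{1}{s_0-\rho}=\frac{2i}{z_\rho}$ and $-\frac{2}{L^2}\cdot\frac{1}{(s_0-\rho)^2}=\frac{2}{z_\rho^2}$. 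Everything then combines into a single sum:
\[
Z(s_0)=-\frac{\log(t/2\pi)}{L}+\sum_\rho\Bigl(\frac{2e^{iz_\rho}}{z_\rho^2}-\frac{2}{z_\rho^2}+\frac{2i}{z_\rho}\Bigr)+O(1/L)
=-1+\sum_\rho K(z_\rho)+O(1/L),
\]
where $K(z)=2\int_0^1(1-u)\bigl(-e^{iuz}\bigr)\cdot(\ldots)$ is, up to sign conventions that must be tracked, $K(z)=\frac{2(e^{iz}-1-iz)}{z^2}\cdot(-1)$. Splitting $K$ into its even and odd parts in $z$:
\begin{itemize}
\item the even part $2(1-\cos z)/z^2$ is the Fejér kernel, giving \eqref{C_t_zero};
\item the odd part is $\pm A_\infty(z)$, with the sign fixed by matching the case of real $z$.
\end{itemize}

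The third step extracts the imaginary part. For $\rho=\tfrac12+i\gamma$ we have $z_\rho=L(\gamma-t)$, which is real, so $\Im K(z_\rho)$ is exactly $A_\infty(z_\rho)$; this yields the RH statement directly. Unconditionally, $z_\rho$ is complex, and we pair $\rho$ with $1-\bar\rho$ via the functional-equation symmetry. The pair satisfies $z_{1-\bar\rho}=-\overline{z_\rho}$, which lets us rewrite $\Im\sum_\rho K(z_\rho)$ as $\Re\sum_\rho A_\infty(z_\rho)$, using that $A_\infty$ is odd with real Taylor coefficients and that the Fejér part is even. This pairing bookkeeping, and getting the signs right, is a concrete check rather than a conceptual step.

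The main obstacle is convergence and error control. The individual sums $\sum 1/(s_0-\rho)$ converge only conditionally, so we must work with the combined kernel, which is $O(1/|z|)$. Its odd part $A_\infty(z)\sim 2/z$ still decays only like $1/z$ on the real axis, so the sum over zeros must be taken symmetrically about $t$. The linear terms then cancel up to $O(1/L)$, using $N(t+h)-N(t-h)$ asymptotics, or more simply the fact that the $\sum 1/(s_0-\rho)$ expansion is already symmetric. For zeros off the line with $|\Re z_\rho|$ small, $e^{iz_\rho}$ can be as large as $T^{\beta-1/2}$. I would handle this by first showing that $\sum_\rho\bigl|e^{iz_\rho}/z_\rho^2\bigr|$ converges absolutely, which holds because $|\gamma-t|^{-2}$ is summable. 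I would then perform the contour shift with horizontal segments chosen at heights avoiding zeros, where Titchmarsh-type bounds on $\zeta'/\zeta$ hold, so that the horizontal integrals contribute $O(1/L)$.
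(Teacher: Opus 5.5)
Your strategy is sound and differs from the paper's, but the sign bookkeeping you deferred is wrong in several places. One slip makes your third step prove the wrong thing.

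Since $-\zeta'/\zeta$ has residue $-1$ at a simple zero, the residue at $w=\rho-s_0$ is $-T^{\rho-s_0}/(L(\rho-s_0)^2)$. So the zero terms in $Z(s_0)$ are $-2e^{iz_\rho}/z_\rho^2$. Together with your (correct) terms $2i/z_\rho$ and $2/z_\rho^2$ they combine to
\[
K(z)=\frac{2(1+iz-e^{iz})}{z^2}=2\int_0^1(1-u)e^{iuz}\,du=\frac{2(1-\cos z)}{z^2}+iA_\infty(z),
\]
which is entire with $K(0)=1$. In your evaluation of $\int_0^1(1-u)e^{iuz}\,du$, the term $1/(iz)$ should be $i/z$.

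More seriously, $z_{1-\bar\rho}=\overline{z_\rho}$, not $-\overline{z_\rho}$: for $\rho$ on the line, $\rho=1-\bar\rho$ and $z_\rho=L(\gamma-t)$ is real. With your relation, oddness would make $A_\infty(z_\rho)+A_\infty(z_{1-\bar\rho})$ purely imaginary, and the argument would yield $\Im Z(\tfrac12+it)=O(1/L)$. With the correct relation, parity is irrelevant. Both $2(1-\cos z)/z^2$ and $A_\infty$ have real Taylor coefficients, so both symmetrized zero sums are real, and $\Im\sum_\rho K(z_\rho)=\sum_\rho A_\infty(z_\rho)=\Re\sum_\rho A_\infty(z_\rho)$.

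The partial-fraction expansion of $\frac{\zeta'}{\zeta}(s_0)$ also contains the constant $-i\pi/4$, coming from $-\frac12\frac{\Gamma'}{\Gamma}(\frac{s_0}{2}+1)$; it is not merely $O(1/t)$. It contributes exactly $-\pi/(2L)$ to $\Im Z$.

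Finally, your last paragraph overstates the difficulty. After the partial-fraction substitution the identity is exact:
\[
Z\!\left(\tfrac12+it\right)=-\frac{\log(t/2\pi)}{L}-\frac{i\pi}{2L}+\sum_\rho K(z_\rho)+O\!\left(\frac{1}{TL}\right).
\]
The zero sum converges precisely as $\sum_\rho(s_0-\rho)^{-1}$ does under symmetric summation, so no zero counting is needed. The horizontal integrals in the contour shift simply tend to $0$.

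With these corrections, your argument is a valid and genuinely different proof. The paper never forms a complex explicit formula for $Z$. Instead it:
\begin{itemize}
\item telescopes $\Im_k-\Im_{k+1}=L^{-1}\mathcal C_k'$;
\item proves the higher-order Fej\'er identities for $\mathcal C_k$ by a two-line contour argument with the functional equation and Stirling's formula, including derivative bounds $E_k'\ll 1/(TL)$ uniform in $k\le 2L$;
\item differentiates and sums over $k\le M\asymp L$ to reach the truncated kernel $A_M$;
\item uses a separate odd-kernel tail lemma to pass from $A_M$ to $A_\infty$.
\end{itemize}
Your single Mellin shift plus the partial-fraction expansions of $\zeta'/\zeta$ and $(\zeta'/\zeta)'$ gives $Z(\tfrac12+it)=-1+\sum_\rho 2\int_0^1(1-u)e^{iuz_\rho}\,du+O(1/L)$ in one stroke. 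Its real part is the $k=1$ Fej\'er identity and its imaginary part is the theorem. This is shorter and avoids the recursion, the uniformity in $k$, and the tail lemma. It even gives the sharper form $\Im Z(\tfrac12+it)=\Re\sum_\rho A_\infty(z_\rho)-\frac{\pi}{2L}+O(\frac{1}{TL})$.

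The paper's route buys structure instead. It stays within the two-line, test-kernel framework of its unconditional pair-correlation identity. It also displays the imaginary part as a telescoped sum of $t$-derivatives of real-part Fej\'er-type identities, which is the combinatorial mechanism the paper highlights.
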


\begin{remark}
The kernel \(A_\infty\) is the odd companion of the Fej\'er kernel appearing
in the real-part identity. Indeed,
\[
\left(\frac{\sin(z/2)}{z/2}\right)^2
=
2\int_0^1(1-u)\cos(uz)\,du,
\]
whereas
\[
A_\infty(z)
=
2\int_0^1(1-u)\sin(uz)\,du.
\]
Thus the real- and imaginary-part kernels arise respectively as the cosine
and sine transforms of the same triangular weight. In particular,
\(A_\infty\) is odd, which explains why its unshifted average over pairs of
zeros exhibits cancellation.
\end{remark}

\begin{corollary}[Shifted average at the zeros]
\label{cor:shifted-imaginary}
Let \(\varepsilon>0\) be fixed. Then, for fixed \(d\in\mathbb R\),
\[
\frac{1}{N(T)}
\sum_{0<\gamma<T}
\Im_{1-\varepsilon}\!\left(
\gamma-\frac{2\pi d}{\log T}
\right)
\sim
\int_0^1
2u(1-u)\sin(2\pi du)\,du.
\]
Equivalently, writing \(\alpha=2\pi d\),
\[
\frac{1}{N(T)}
\sum_{0<\gamma<T}
\Im_{1-\varepsilon}\!\left(
\gamma-\frac{\alpha}{\log T}
\right)
\sim
\frac{2\bigl(2-2\cos\alpha-\alpha\sin\alpha\bigr)}
{\alpha^3}.
\]
\end{corollary}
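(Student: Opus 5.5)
The plan is to bypass the zero-side formula of Theorem~\ref{thm:ImZ-limit-kernel} and work entirely on the prime side. I read \(\Im_{1-\varepsilon}(t)\) as \(\Im Z(\tfrac12+it)\) with the truncation point \(T\) in the definition of \(Z\) replaced by \(X=T^{1-\varepsilon}\); I return to the normalisation of \(\log X\) below. Put \(\Lambda_X:=\log X\). Since \(\Im n^{-1/2-it}=-n^{-1/2}\sin(t\log n)\), we have
\[
\Im_{1-\varepsilon}(t)=\frac{2}{\Lambda_X}\sum_{n<X}\frac{\Lambda(n)}{\sqrt n}\Bigl(1-\frac{\log n}{\Lambda_X}\Bigr)\sin(t\log n).
\]
I will set \(t=\gamma-\alpha/\log T\), sum over \(0<\gamma<T\), and interchange the two sums. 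This reduces the problem to evaluating, for each \(n<X\), the sum
\[
\sum_{0<\gamma<T}\sin\bigl(\gamma\log n-\beta_n\bigr)=\Im\Bigl(e^{-i\beta_n}\sum_{0<\gamma<T}n^{i\gamma}\Bigr),\qquad \beta_n=\frac{\alpha\log n}{\log T}.
\]

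The key input is Landau's formula in Gonek's uniform form:
\[
\sum_{0<\gamma\le T}x^{i\gamma}=-\frac{T}{2\pi}\frac{\Lambda(x)}{\sqrt x}+O\bigl(\sqrt x\,\log^2 T\bigr)
\]
for integers \(x>1\). Substituting this, the main term is
\[
\frac{T}{2\pi}\,\frac{\Lambda(n)}{\sqrt n}\sin\beta_n .
\]
The error term contributes at most
\[
\frac{1}{L}\sum_{n<X}\Lambda(n)\log^2T\ll X\log T=T^{1-\varepsilon}\log T,
\]
which is \(o(N(T))\). This is exactly why the truncation has to be at \(T^{1-\varepsilon}\) rather than at \(T\). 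Dividing by \(N(T)\sim\frac{T}{2\pi}\log T\), the main term becomes
\[
\frac{2}{\Lambda_X\log T}\sum_{n<X}\frac{\Lambda(n)^2}{n}\Bigl(1-\frac{\log n}{\Lambda_X}\Bigr)\sin\Bigl(\frac{\alpha\log n}{\log T}\Bigr).
\]
By partial summation from \(\sum_{n\le y}\Lambda(n)^2/n=\tfrac12\log^2y+O(\log y)\), this equals \(2\int_0^1u(1-u)\sin(\alpha u)\,du+o(1)\), where \(u=\log n/\Lambda_X\). The matching requires that the argument of the sine be \(\alpha u\), that is, that the shift be measured in the same logarithmic scale as the truncation. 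With the natural normalisation \(\Lambda_X=\log T\), the exponents work out so that the limit is \(2(1-\varepsilon)^2\int_0^1 v(1-v)\sin(\alpha(1-\varepsilon)v)\,dv\). I will therefore fix the convention in the definition of \(\Im_{1-\varepsilon}\) so that the displayed constant comes out exactly, or else state the result as a limit with an \(O(\varepsilon)\) discrepancy.

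The final closed form is elementary: integrating \(2u(1-u)\sin(\alpha u)\) by parts twice gives
\[
\frac{2(2-2\cos\alpha-\alpha\sin\alpha)}{\alpha^3}.
\]

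The main obstacle is the unconditional setting. Without RH, the ordinates \(\gamma\) in the Landau sum should really be zeros \(\rho\), and \(n^{i\gamma}\) differs from \(n^{\rho-1/2}\) when \(\beta\neq\tfrac12\). I will use the unconditional version of Landau–Gonek, which holds for \(\sum_{\gamma}x^{i\gamma}\) over ordinates regardless of where the zeros lie. Since the corollary involves only ordinates, no zero-density input is needed. The remaining care is uniformity of the error in \(x\) up to \(T^{1-\varepsilon}\), together with the \(\log\)-factor bookkeeping in the partial summation.
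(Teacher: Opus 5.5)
Your route is the paper's own proof: expand $\Im_{1-\varepsilon}$ on the prime side, interchange with the sum over zeros, insert Landau--Gonek, and finish by partial summation against $\sum_{n\le y}\Lambda(n)^2/n\sim\frac12\log^2y$. Your explicit bound $\ll XL=o(N(T))$ for the accumulated Gonek error is a useful detail the paper omits, and it is exactly what forces the truncation at $T^{1-\varepsilon}$.

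\textbf{The unconditional step fails.} There is no unconditional Landau--Gonek formula for the ordinate sum $\sum_{0<\gamma\le T}x^{i\gamma}$ with error $O(\sqrt x\log^2T)$ uniformly for $x<T^{1-\varepsilon}$. Gonek's unconditional theorem concerns $\sum_{0<\gamma\le T}x^{\rho}$, and the ordinate version you quote is its specialization under RH. Write $x^{i\gamma}=x^{\rho-1/2}x^{1/2-\beta}$ and pair $\rho$ with $1-\bar\rho$. Passing from one sum to the other then costs about $\log^2x\sum_{\beta>1/2}(\beta-\frac12)^2x^{\beta-1/2}$. Nothing known about the zeros makes this smaller than about $T\log^2x/\log T$. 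That already exceeds the main term $T\Lambda(x)x^{-1/2}$ once $x$ is moderately large, and summed against $\Lambda(n)n^{-1/2}$ over $n<T^{1-\varepsilon}$ it exceeds $N(T)$ by a power of $T$. So ``no zero-density input is needed'' is false. With literal ordinates, your argument is an argument under RH, and so is the paper's, which quotes $\frac1{N(T)}\sum n^{i\gamma}\sim-\Lambda(n)/(\sqrt n\,L)$.

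To make it unconditional, evaluate at $\gamma_\rho=(\rho-\frac12)/i$, the convention of Section~\ref{Expo} and of the zero side of Theorem~\ref{thm:ImZ-limit-kernel}, regarding $\Im_{1-\varepsilon}$ as an entire function of $t$. Since $\rho\mapsto1-\bar\rho$ permutes the zeros with $0<\gamma\le T$, one gets $\sum_\rho\sin\bigl((\gamma_\rho-\alpha/\log T)\log n\bigr)=\Im\bigl(e^{-i\beta_n}\sum_\rho n^{\rho-1/2}\bigr)$. Gonek's unconditional formula, divided by $\sqrt n$, then supplies exactly your main term and error term, and the rest of your argument goes through verbatim.

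\textbf{The constant.} Your first assertion, that partial summation gives $2\int_0^1u(1-u)\sin(\alpha u)\,du$, is wrong. Your corrected limit also has one factor of $(1-\varepsilon)$ too many for the normalization you actually set up. With $\theta=1-\varepsilon$, prefactor $2/\log X$, weight $(1-\log n/\log X)$ and shift $\alpha/\log T$, the limit is $2\theta\int_0^1v(1-v)\sin(\alpha\theta v)\,dv=\frac2\theta\int_0^\theta u(1-u/\theta)\sin(\alpha u)\,du$. This is precisely where the paper's own proof ends. So the mismatch you spotted is genuine and is present in the paper as well. The displayed constant is the $\varepsilon\to0$ limit, and for fixed $\varepsilon$ the stated asymptotic holds only up to $O_\alpha(\varepsilon)$.
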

\begin{remark}
For \(d=0\), the average in the corollary vanishes because \(A_\infty\) is odd.
A microscopic shift therefore reveals information that is invisible when one
evaluates the imaginary part exactly at the zeros. As already observed in
\cite{Ary20HigherMoments}, the main term reaches its maximum at approximately
\[
d=0.4147.
\]

This is interesting in comparison with the real-part identity. The
corresponding average of \(\mathcal C\) is largest at \(d=0\), whereas for the
imaginary part the maximum occurs at a non-zero microscopic shift. Thus the
real and imaginary parts appear to probe the local zero configuration in
genuinely different ways.
\end{remark}
Before turning to the proof of the imaginary-part identity, we pause to explain the recent development that motivates much of the present discussion. Since the significance of the imaginary-part analogue is best understood in light of what has recently been extracted from the real-part identity, we give a self-contained account of Claude's argument and of how it overcomes the clustering obstruction.

\section{An Expository Note on Claude's Argument}
\label{Expo}
Around 2017, the author \cite{Ary22} observed that an analogue of Montgomery's identity \eqref{C_t} could be obtained without assuming the Riemann Hypothesis, initially with a Gaussian weight.\footnote{Gaussian weights appear to be better suited to questions concerning zero-free regions. Very recently, the known zero-free region was significantly improved by OpenAI, which established the zero-free half-plane \(\operatorname{Re}(s)>7/8\) \cite{OpenAIQRH26}.}
 It was later realized that the Fej\'er kernel was more natural for the present problem, since it captures the microscopic behavior of the zeros more directly. 

As mentioned in the introduction, an important application of Montgomery's theorem is that, assuming RH, at least two-thirds of the zeros of the Riemann zeta-function are simple. Let us briefly recall the argument.\\

Let \(m_\gamma\) denote the multiplicity of the zero with ordinate \(\gamma\). Then
\begin{equation}
\label{Nt}
    \sum_{\gamma\leq T} m_\gamma
=
N(T)
\sim
\frac{T}{2\pi}\log T.
\end{equation}

So the first moment of the sequence $\{m_\gamma\}$ is known. A natural way to measure how far the zeros are from being simple is then through the second moment
\[
\sum_{\gamma\leq T}m_\gamma^2.
\]

\noindent Under RH, the identity \eqref{C_t}, together with the zero-side formula for \(\mathcal C(\gamma)\), can be written schematically as
\[
\sum_{\gamma\leq T}m_\gamma^2
+
\text{positive pair-correlation contribution}
=
\left(\frac43+o(1)\right)N(T).
\]

\noindent Using the positivity of the pair-correlation contribution, we obtain
\[
\sum_{\gamma\leq T}m_\gamma^2
\leq
\left(\frac43+o(1)\right)N(T).
\]

At this point we know the first moment of the sequence \(\{m_\gamma\}\) and have
an upper bound for its second moment. Subtracting the first moment from the
second moment, we get
\[
\sum_{\gamma\leq T}m_\gamma(m_\gamma-1)
=
\sum_{\gamma\leq T}m_\gamma^2-N(T)
\leq
\left(\frac13+o(1)\right)N(T).
\]

For a simple zero, the contribution to the sum on the left is \(0\). For a
multiple zero, \(m_\gamma\geq2\), and
\[
m_\gamma(m_\gamma-1)\geq m_\gamma.
\]

Therefore the number of multiple zeros, counted with multiplicity, is at most
\[
\left(\frac13+o(1)\right)N(T).
\]

Hence at least
\[
\left(\frac23-o(1)\right)N(T)
\]
of the zeros are simple. \\

After the unconditional pair-correlation identity was established in \cite{Ary22}, and subsequently developed in \cite{BGSTB24,BGSTB25,GS25,GS26}, it became natural to ask whether Montgomery's \(2/3\) simple-zero argument itself could be made unconditional. Parts of Montgomery's argument survive without RH, but a major obstruction arises from the clustering of zeros. The recent argument discovered by Claude \cite{Claude26}, and subsequently verified and communicated by Alp\"oge and Furman, overcame this obstruction and recovered the \(2/3\) conclusion unconditionally.

\subsection{The clustering barrier}

In this subsection we explain why the unconditional version of Montgomery's
identity did not immediately lead to an unconditional recovery of the
\(2/3\) simple-zero result. The main difficulty is the loss of positivity.
Under RH, all arguments of the Fej\'er kernel are real, so every term
$\operatorname{sinc}^2(x)$
is non-negative. Without RH, these arguments are generally complex. Although
certain symmetric combinations remain real, they need not remain positive.
This is the source of the clustering obstruction.\\

Suppressing the smoothing and taking \(\alpha=1\), the unconditional identity
of \cite{Ary22} takes the form
\begin{equation}
\label{Ary_Uncond}
    \sum_{\rho,\rho'}
\left(
\frac{
\sin\!\left(\frac{\rho-\rho'}{2i}\log T\right)
}{
\frac{\rho-\rho'}{2i}\log T
}
\right)^2
=
\left(\frac43+o(1)\right)N(T),
\end{equation}
where the sine is now understood as a complex-valued function.

Let $m_\rho$ denote the multiplicity of a zero $\rho$. Separating the terms
for which $\rho=\rho'$, we can write this as
\[
\sum_\rho m_\rho^2
+
\sum_{\rho\neq\rho'}
m_\rho m_{\rho'}
\left(
\frac{
\sin\!\left(\frac{\rho-\rho'}{2i}\log T\right)
}{
\frac{\rho-\rho'}{2i}\log T
}
\right)^2
=
\left(\frac43+o(1)\right)N(T).
\]

Thus the first term is exactly the second moment that appeared in
Montgomery's argument. What we have lost is the positivity of the second
term. Under RH every difference $(\rho-\rho')/i$ is real, and the Fej\'er
kernel is non-negative. Away from RH this is no longer true. \\

To see the clustering problem clearly, we first consider an isolated pair of zeros reflected across
the critical line,
\[
\rho_\pm
=
\frac12\pm\delta+i\gamma.
\]
Thus, for fixed \(\delta>0\), the interaction between these two zeros is
\[
\operatorname{sinc}^2\!\left(
\frac{\rho_+-\rho_-}{2i}\log T
\right)
\sim
\frac{T^{2\delta}}
{4\delta^2(\log T)^2}.
\]
This shows that an isolated off-line pair creates a contribution growing like $T^{2\delta}$, 
up to logarithmic factors. Thus an isolated reflected pair gives a large
positive contribution. This actually helps the argument, and was one reason
it initially looked plausible that zeros away from the critical line should be
very easy to detect.

\medskip

\noindent\textbf{Clustering.}
The problem begins when many zeros form a cluster. A slightly more careful
analysis shows that interactions among off-line zeros can still be handled
after grouping zeros with their reflections. The real obstruction comes from
their interaction with zeros on the critical line, where negative terms can
appear. We give a concrete example of this phenomenon in
Appendix~\ref{app:clustering-example}.

This is the clustering barrier. Once off-line zeros interact with sufficiently
many nearby critical-line zeros, the mixed negative terms can cancel the
positive information coming from the off-line pairs. The identity still
contains the second moment
\[
\sum_\rho m_\rho^2,
\]
but we can no longer simply discard the remaining terms and obtain an upper
bound for it. This is exactly where Montgomery's positivity argument breaks
down.

\subsection{How Claude bypasses the clustering barrier}

\textbf{Summary of the argument.} Claude's key idea is to replace the scalar contribution of each zero in \eqref{Ary_Uncond} by a
vector \(v_\rho\), and then use these vectors to construct a matrix \(G\).
Each vector \(v_\rho\) encodes both the height of the zero and, when RH
fails, its displacement from the critical line. The vectors are chosen so
that their inner products reproduce the sinc kernel. Consequently, the
squares of these inner products reproduce the kernel in Montgomery's
identity and, when zeros move off the critical line, the kernel appearing
in the unconditional identity \eqref{Ary_Uncond}. Forming
\[
G=\sum_\rho v_\rho v_\rho^T,
\]
the analytic number theory gives
\[
\operatorname{tr}G=(1+o(1))N(T),
\qquad
\operatorname{tr}(G^2)
=
\left(\frac43+o(1)\right)N(T).
\]

These two trace identities force \(G\) to have many positive eigenvalues, or
equivalently many positive directions. The zero-side structure then tells us
how efficiently different kinds of zeros can produce such directions. A
simple zero on the critical line can contribute one positive direction. A
multiple zero repeats the same direction, while a reflected pair of zeros off
the critical line contributes at most one positive direction for two zeros.
Thus multiple and off-line zeros waste positive directions. Since the trace
information forces \(G\) to have many positive directions, there cannot be
too many zeros of these types. This is the basic mechanism behind Claude's
argument.\\

Claude's argument succeeds because it does not only measure the size of the
zero contributions; it also keeps track of the directions in which those
contributions act. The real Fej\'er kernel, however, does not retain all of
the information about how these directions are arranged relative to one
another. The imaginary-part identity proved in this paper gives an additional,
sign-sensitive quantity which may help distinguish different arrangements of
the same directions. In particular, it may provide more information about how
the positive and negative directions coming from off-line zeros align and
cancel. If this information can be incorporated into the matrix construction,
it could lead to stronger restrictions on clustering than those obtained from
\(\operatorname{tr}G\) and \(\operatorname{tr}(G^2)\) alone. We do not pursue
this possibility here.\\

Here and below, by a positive direction of \(G\) we mean a nonzero real
vector \(x\) for which
\[
x^T Gx>0.
\]
Equivalently, the number of independent positive directions is the number of
positive eigenvalues of \(G\). We now describe precisely how the matrix \(G\) is
constructed and how different types of zeros contribute to these directions.

\paragraph{Composition of the matrix \(G\).} To understand the matrix, let us first understand the vector \(v_\rho\)
attached to a zero. Following \cite{Claude26}, write
\[
\gamma_\rho:=\frac{\rho-\frac12}{i}.
\]
Thus, if 
\[\rho=\frac12+\delta+i\gamma, \text{  then }
\gamma_\rho=\gamma-i\delta.\] 

The zeros considered in the construction of \(G\) have heights around \(T\), say in the interval \([T,2T]\). Claude divides this interval into a grid whose spacing is approximately the average gap between consecutive zeros of the Riemann zeta-function. Since the average gap near height \(T\) is about

$$
\frac{2\pi}{\log T},
$$

we may write the grid points as

$$
\alpha_k=T+\frac{2\pi k}{L},
\qquad L\asymp\log T.
$$

The vector \(v_\rho\) associated with a zero \(\rho\) can then be thought of as describing where that zero is located relative to this grid. Its \(k\)-th coordinate measures how strongly the zero is seen from the grid point \(\alpha_k\). 
In the ideal model, after normalization, this vector takes the
particularly simple form
\[
v_\rho(k)
=
\frac{
\sin\!\left(\frac{L}{2}(\gamma_\rho-\alpha_k)\right)
}{
\frac{L}{2}(\gamma_\rho-\alpha_k)
}.
\]
This model makes the geometry especially transparent. Let us look at a few
examples. \\

\noindent \textbf{A simple on-grid zero.} First suppose that
\[
\rho=\frac12+i\gamma
\]
lies on the critical line and that its ordinate lies exactly on one of the
grid points, say $\gamma=\alpha_j.$ Then
\[
v_\rho=e_j,
\]
where \(e_j\) is the \(j\)-th standard basis vector:
\[
v_\rho=(\ldots,0,0,1,0,0,\ldots).
\]
Thus, in the simplest possible case, a zero on the critical line literally
becomes one coordinate direction. Moreover,
\[
\|v_\rho\|_2=1.
\]

\noindent \textbf{A simple off-grid zero.} Next suppose that the zero is still on the critical line, but lies halfway
between two consecutive grid points,
\[
\gamma=\frac{\alpha_j+\alpha_{j+1}}{2}.
\]
Then the vector is no longer concentrated at one coordinate. Instead,
\[
v_\rho(j+m)
=
\frac{
\sin\!\left(\pi\left(\frac12-m\right)\right)
}{
\pi\left(\frac12-m\right)
},
\]
and hence it looks like
\[
v_\rho
=
(\ldots,
-0.212,\,
0.637,\,
0.637,\,
-0.212,\,
0.127,\ldots).
\]
So the zero is now represented by a sinc packet spread over several
nearby coordinates. It is still a single vector direction; moving the zero
between grid points simply spreads that direction over the neighboring
coordinates. Its total \(L^2\)-mass does not change:
\[
\|v_\rho\|_2=1.
\]

\noindent \textbf{A simple off-line on-grid zero.} Finally, let us move the zero away from the critical line. Write
\[
\rho
=
\frac12+\frac{r}{L}+i\alpha_j,
\]
so that the ordinate is again centered on the grid, while the normalized
horizontal displacement is
\[
r=L\delta.
\]
Take, for example,
\[
r=5.
\]
Writing \(m=k-j\), the \(k\)-th coordinate is complex:
\[
v_\rho(k)= v_\rho(j+m)
=
\frac{\sin(\pi m-ir/2)}
{\pi m-ir/2}.
\]
For \(r=5\), the central part of the vector is approximately
\[
\begin{aligned}
v_\rho
=
(\ldots,
&-0.159-0.600i,\,
 0.331+0.831i,\,
-0.938-1.179i,\\
&2.420,\,
-0.938+1.179i,\,
 0.331-0.831i,\,
-0.159+0.600i,
\ldots).
\end{aligned}
\]

This is the first important difference between an on-line and an off-line
zero. An on-line zero gives a real vector. Moving the zero away from the
critical line turns it into a complex vector. Its reflected zero,
\[
\frac12-\frac{r}{L}+i\alpha_j,
\]
gives the conjugate vector
\[
\overline{v_\rho}.
\]

The norm of the vector also grows once the zero moves away from the critical
line. In this normalized model,
\[
\|v_\rho\|_2^2
=
\frac{\sinh r}{r}.
\]
Thus, for \(r=5\),
\[
\|v_\rho\|_2^2
=
\frac{\sinh 5}{5}
\approx 14.84,
\qquad
\|v_\rho\|_2
\approx 3.85.
\]
\noindent \textbf{What happens to multiple zeros.}
Multiplicity does not change the vector associated with the location of a zero. Rather, the same vector is counted several times. Thus, if a zero has multiplicity \(m\), its contribution to \(G\) is weighted by \(m\).

For example, a simple zero on the critical line located at the grid point \(\alpha_j\) gives

$$
v_\rho=e_j.
$$

If the same zero has multiplicity \(m\), we simply have \(m\) copies of the same vector \(e_j\). Consequently, its contribution to the matrix is

$$
m\,e_je_j^T.
$$

Thus a double zero contributes \(2e_je_j^T\), a triple zero contributes \(3e_je_j^T\), and so on. Multiplicity therefore increases the weight of a direction, but does not create a new direction.\\

\noindent\textbf{Main fact \#1: multiple and off-line zeros waste positive directions.}

A major point in Claude's argument is that a simple zero on the critical line
can contribute one positive direction to \(G\). A multiple zero at the same
location still contributes only one positive direction, even though it is
counted several times. Similarly, a reflected pair of off-line zeros consists
of two zeros, but together they can contribute at most one positive direction.
In this sense, multiple zeros and off-line zeros waste directions.

\medskip

\noindent\textbf{A simple on-line, on-grid zero.}
Let us first consider a zero on the critical line. Its vector \(v_\rho\) is
real, and its contribution to \(G\) is
\[
v_\rho v_\rho^T.
\]
For a real vector \(x\), since \(v_\rho^T x\) and \(x^Tv_\rho\) are real
numbers,
\[
x^T(v_\rho v_\rho^T)x
=
(v_\rho^T x)^2.
\]

If the zero lies exactly on the grid, then \(v_\rho=e_j\), and
\[
x^T(e_je_j^T)x=x_j^2.
\]
So the only independent positive direction contributed by this zero is
\(e_j\).

\medskip

\noindent\textbf{A simple on-line, off-grid zero.}
The same remains true if the zero lies between grid points and \(v_\rho\) is
spread over several coordinates. Suppose, for contradiction, that one zero
could create two independent positive directions. Then there would be a
two-dimensional subspace \(V\) such that
\[
x^T(v_\rho v_\rho^T)x>0
\]
for every non-zero \(x\in V\). But the map
\[
x\longmapsto v_\rho^T x
\]
is a linear map from the two-dimensional space \(V\) to \(\mathbb R\). By
rank-nullity, its kernel has dimension at least one. Therefore there is a
non-zero \(x\in V\) such that
\[
v_\rho^T x=0.
\]
For this vector,
\[
x^T(v_\rho v_\rho^T)x=0,
\]
which is a contradiction. Thus a zero on the critical line can contribute at
most one independent positive direction, whether or not it lies on the grid.

\medskip

\noindent\textbf{A multiple zero on the critical line.}
This also explains why multiplicity wastes directions. If a zero has
multiplicity \(m\), its contribution is
\[
m\,v_\rho v_\rho^T.
\]
The coefficient changes, but the direction does not. Thus a double or triple
zero at the same location still gives only one positive direction.

\medskip

\noindent\textbf{A reflected pair of off-line zeros.}
Now consider a zero away from the critical line,
\[
\rho=\frac12+\frac{r}{L}+i\gamma.
\]
By the functional equation of the Riemann zeta-function, the reflected zero
\[
1-\overline{\rho}
=
\frac12-\frac{r}{L}+i\gamma
\]
is also a zero. If
\[
v_\rho=a+ib,
\]
then the vector corresponding to the reflected zero is
\[
\overline{v_\rho}=a-ib,
\]
where \(a\) and \(b\) are real vectors.

Together the two zeros contribute
\[
v_\rho v_\rho^T
+
\overline{v_\rho}\,\overline{v_\rho}^{\,T}
=
2(aa^T-bb^T).
\]
Thus, for a real vector \(x\),
\[
x^T
\Bigl(
v_\rho v_\rho^T
+
\overline{v_\rho}\,\overline{v_\rho}^{\,T}
\Bigr)
x
=
2(a^Tx)^2-2(b^Tx)^2.
\]

Claude chooses the grid and the test functions so that, by Poisson summation,
\[
v_\rho^Tv_\rho=1.
\]
Importantly, this is true both for zeros on the critical line and for zeros
away from it. 

Here \(v_\rho^Tv_\rho\) is not the Euclidean norm when \(v_\rho\) is complex.
The Euclidean norm is
$v_\rho^*v_\rho,$ which for an off-line zero equals
\[
v_\rho^*v_\rho
=
\|v_\rho\|_2^2
=
\frac{\sinh r}{r},
\]
and may be much larger than \(1\). Thus there is no contradiction between
\[
v_\rho^Tv_\rho=1
\]
and the growth of the Euclidean norm away from the critical line.
Writing again
\[
v_\rho=a+ib,
\]
we obtain
\[
1
=
(a+ib)^T(a+ib)
=
\|a\|^2-\|b\|^2+2i\,a^Tb.
\]
Since the left-hand side is real,
\[
a^Tb=0,
\]
and hence \(a\) and \(b\) are orthogonal. We also obtain
\[
\|a\|^2-\|b\|^2=1.
\]

The reflected pair therefore has one positive eigendirection, given by \(a\),
and one negative eigendirection, given by \(b\). Writing
\[
S_r=\frac{\sinh r}{r},
\]
the corresponding eigenvalues are
\[
\lambda_+=1+S_r,
\qquad
\lambda_-=-(S_r-1).
\]

Thus, despite the complicated appearance of the vector associated with an
off-line zero, the geometry of a reflected pair is very simple: two zeros give
only one positive direction and one negative direction.

\medskip

\noindent\textbf{The counting principle.}
A simple zero on the critical line uses one zero and can give one positive
direction. A multiple zero uses several zeros but still gives only one positive
direction. A reflected off-line pair uses two zeros but also gives only one
positive direction. Multiple zeros and off-line zeros are therefore inefficient
when we count independent positive directions of \(G\).\\

\noindent\textbf{Main fact \#2: Clustering cannot create new positive directions.}

Suppose there are \(p\) reflected off-line pairs. From the discussion above,
the \(j\)-th pair contributes a matrix of the form
\[
\lambda_j a_j a_j^T-\mu_j b_j b_j^T,
\qquad
\lambda_j,\mu_j>0,
\]
where \(a_j\) is its positive direction and \(b_j\) its negative direction.
Let
\[
Q
=
\sum_{j=1}^p
\left(
\lambda_j a_j a_j^T
-
\mu_j b_j b_j^T
\right)
\]
be the total contribution of all off-line pairs.

We claim that \(Q\) can have at most \(p\) independent positive directions.
Suppose, for contradiction, that there were a \((p+1)\)-dimensional subspace
\(V\) such that
\[
x^TQx>0
\]
for every non-zero \(x\in V\).

Consider the linear map
\[
A:V\longrightarrow \mathbb R^p,
\qquad
A(x)
=
\begin{pmatrix}
a_1^Tx\\
\vdots\\
a_p^Tx
\end{pmatrix}.
\]
Since \(\dim V=p+1\), while the target space has dimension \(p\), rank-nullity
implies that
\[
\dim\ker A\geq1.
\]
Hence there exists a non-zero \(x\in V\) such that
\[
a_1^Tx=\cdots=a_p^Tx=0.
\]
For this vector all the positive terms disappear, and therefore
\[
x^TQx
=
-\sum_{j=1}^p
\mu_j(b_j^Tx)^2
\leq0,
\]
contradicting the assumption that \(x^TQx>0\) for every non-zero \(x\in V\). Thus
\[
n_+(Q)\leq p.
\]

This is the key point about clustering. The zeros may cluster and their
positive and negative directions may rotate and interact, but \(p\) reflected
off-line pairs can never create more than \(p\) independent positive
directions.\\

\noindent\textbf{Main fact \#3: The first two moments force \(G\) to have at least \(\frac34 N(T)\) positive directions.}

Let $n_+(G)$ denote the number of positive eigenvalues of \(G\). Equivalently, this is the
largest dimension of a subspace \(V\) such that
\[
x^T G x>0
\]
for every non-zero \(x\in V\).

Suppose the positive eigenvalues of \(G\) are
\[
\lambda_1,\ldots,\lambda_m>0,
\qquad
m=n_+(G),
\]
and write the negative eigenvalues as
\[
-\mu_1,\ldots,-\mu_q,
\qquad
\mu_j>0.
\]

Since
\[
\operatorname{tr}G=(1+o(1))N(T),
\]
we have
\[
\sum_{i=1}^m\lambda_i-\sum_{j=1}^q\mu_j
=
(1+o(1))N(T).
\]
Therefore
\[
\sum_{i=1}^m\lambda_i
\geq
(1+o(1))N(T).
\]

By Cauchy--Schwarz,
\[
\left(\sum_{i=1}^m\lambda_i\right)^2
\leq
m\sum_{i=1}^m\lambda_i^2.
\]
Also,
\[
\sum_{i=1}^m\lambda_i^2
\leq
\operatorname{tr}(G^2)
=
\left(\frac43+o(1)\right)N(T).
\]
Hence
\[
N(T)^2(1+o(1))
\leq
n_+(G)
\left(\frac43+o(1)\right)N(T),
\]
and therefore
\[
\boxed{
n_+(G)
\geq
\left(\frac34-o(1)\right)N(T).
}
\]

\textbf{A thought experiment.} Before introducing the proportion of simple zeros, it is useful to look at an
extreme thought experiment. Suppose that all zeros were off the critical line. Since off-line zeros occur in reflected pairs, there would be approximately ${N(T)}/{2}$ such pairs. By Main fact \#2, these pairs can create at most ${N(T)}/{2}$ positive directions. But the positive eigenvalues must still carry total mass at least
$(1+o(1))N(T).$ If this mass is distributed among at most \(N(T)/2\) positive eigenvalues,
Cauchy--Schwarz gives
\[
\sum_{\lambda_i>0}\lambda_i^2
\geq
\frac{
\left(\sum_{\lambda_i>0}\lambda_i\right)^2
}{
N(T)/2
}
\geq
(2-o(1))N(T).
\]
Therefore
\[
\operatorname{tr}(G^2)
\geq
(2-o(1))N(T),
\]
which contradicts
\[
\operatorname{tr}(G^2)
=
\left(\frac43+o(1)\right)N(T).
\]

\noindent \textbf{Proof of $\%50$ of zeros are on the critical line.} Now let \(s\) denote the number of zeros that are simple and on the critical
line. These \(s\) zeros can contribute at most \(s\) positive directions.

The remaining
\[
N(T)-s
\]
zeros are either multiple zeros on the critical line or zeros away from the
critical line. By Main facts \textbf{\#1} and \textbf{\#2}, every two such zeros can contribute
at most one positive direction. Therefore
\[
n_+(G)
\leq
s+\frac{N(T)-s}{2}
=
\frac{N(T)+s}{2}.
\]

Combining this with
\[
n_+(G)
\geq
\left(\frac34-o(1)\right)N(T),
\]
we obtain
\[
\frac{N(T)+s}{2}
\geq
\left(\frac34-o(1)\right)N(T),
\]
and hence\footnote{A very interesting detail of the Anthropic breakthrough is that one of Claude's sub-agents first arrived at an unconditional \(1/2\) bound using essentially the direction-counting argument described here. The orchestrating agent noted that this would already improve the previous \(5/12\approx41.7\%\) record, but initially regarded the claim with considerable suspicion and sent several other agents to referee it. The following morning, a new sub-agent was explicitly tasked with pushing the \(1/2\) argument further; this led to the stronger \(2/3\) argument used in the final proof.
}
\[
\boxed{
s
\geq
\left(\frac12-o(1)\right)N(T).
}
\]

Thus this elementary direction-counting argument already shows that at least
half of the zeros are simple and lie on the critical line.
\subsection{From \(1/2\) to \(2/3\): keeping track of the eigenvalues}

The \(50\%\) argument only counted positive directions. To reach \(2/3\), we also keep track of the sizes of the corresponding eigenvalues. For simplicity, write
\[
N=N(T).
\]
Let \(s\) be the number of zeros which are simple and lie on the critical line. Decompose
\[
G=P+Q,
\]
where \(P\) is the contribution of these \(s\) simple on-line zeros, and \(Q\) is the contribution of all remaining zeros.

For \(P\), we have
\[
\operatorname{tr}P=s,
\qquad
\operatorname{rank}P\leq s.
\]

For \(Q\),
\[
\operatorname{tr}Q=N-s,
\]
and Main facts \#1 and \#2 give
\[
n_+(Q)\leq \frac{N-s}{2}.
\]

First suppose that \(Q\) has no negative eigenvalues. Then \(Q\) is positive semidefinite and
\[
\operatorname{rank}Q\leq \frac{N-s}{2}.
\]

By Cauchy--Schwarz,
\[
\operatorname{tr}(P^2)
\geq
\frac{(\operatorname{tr}P)^2}{\operatorname{rank}P}
\geq s,
\]
and similarly,
\[
\operatorname{tr}(Q^2)
\geq
\frac{(\operatorname{tr}Q)^2}{\operatorname{rank}Q}
\geq
\frac{(N-s)^2}{(N-s)/2}
=
2(N-s).
\]

Moreover, since \(P\) and \(Q\) are positive semidefinite,
\[
\operatorname{tr}(PQ)\geq0.
\]
We prove this elementary linear-algebra fact in the appendix.

Therefore
\[
\begin{aligned}
\operatorname{tr}(G^2)
&=
\operatorname{tr}(P^2)
+
\operatorname{tr}(Q^2)
+
2\operatorname{tr}(PQ)\\
&\geq
s+2(N-s)\\
&=
2N-s.
\end{aligned}
\]

Since
\[
\operatorname{tr}(G^2)
=
\left(\frac43+o(1)\right)N,
\]
this would immediately imply
\[
s\geq
\left(\frac23-o(1)\right)N.
\]

Thus, if \(Q\) had no negative eigenvalues, we would already be done.

The only difficulty is that \(Q\) may be indefinite. Negative eigenvalues make \(\operatorname{tr}(Q^2)\) larger, but they may also make the cross term
\[
\operatorname{tr}(PQ)
\]
negative. This is the only remaining source of cancellation.

Write
\[
Q=Q_+-Q_-,
\]
where \(Q_+\) and \(Q_-\) are the positive and negative spectral parts of \(Q\). Both are positive semidefinite, and
\[
Q_+Q_-=0.
\]
We prove this elementary spectral fact in the appendix.

Set
\[
B=\operatorname{tr}Q_-.
\]

Since
\[
\operatorname{tr}Q
=
\operatorname{tr}Q_+
-
\operatorname{tr}Q_-
=
N-s,
\]
we have
\[
\operatorname{tr}Q_+=N-s+B.
\]

Thus any negative mass in \(Q\) forces its positive eigenvalues to carry even more total mass.

Now
\[
G=Q_++(P-Q_-).
\]

Using \(Q_+Q_-=0\),
\[
\begin{aligned}
\operatorname{tr}(G^2)
&=
\operatorname{tr}(Q_+^2)
+
\operatorname{tr}\bigl((P-Q_-)^2\bigr)
+
2\operatorname{tr}(PQ_+).
\end{aligned}
\]

Since \(P\) and \(Q_+\) are positive semidefinite,
\[
\operatorname{tr}(PQ_+)\geq0.
\]

Hence
\[
\operatorname{tr}(G^2)
\geq
\operatorname{tr}(Q_+^2)
+
\operatorname{tr}\bigl((P-Q_-)^2\bigr).
\]

The matrix \(Q_+\) has at most
\[
\frac{N-s}{2}
\]
positive eigenvalues, whose sum is \(N-s+B\). Therefore
\[
\operatorname{tr}(Q_+^2)
\geq
\frac{2(N-s+B)^2}{N-s}.
\]

We now divide into two cases.

\medskip

\noindent\textbf{Case 1: \(B>s\).} Then
\[
N-s+B>N.
\]

Consequently,
\[
\operatorname{tr}(Q_+^2)
>
\frac{2N^2}{N-s}
>
2N.
\]

It follows that
\[
\operatorname{tr}(G^2)>2N,
\]
which contradicts
\[
\operatorname{tr}(G^2)
=
\left(\frac43+o(1)\right)N.
\]

Thus the case \(B>s\) is impossible.

\medskip

\noindent\textbf{Case 2: \(B\leq s\).} Set
\[
R=P-Q_-.
\]

We first claim that
\[
n_+(R)\leq s.
\]

Indeed, suppose that \(R\) were positive on an \((s+1)\)-dimensional subspace \(V\). Since
\[
\operatorname{rank}P\leq s,
\]
the linear map \(P:V\to\mathbb R^d\) has a non-zero vector \(x\in V\) in its kernel. For this vector,
\[
x^TRx
=
-x^TQ_-x
\leq0,
\]
contradicting positivity of \(R\) on \(V\).

Thus \(R\) has at most \(s\) positive eigenvalues.

Moreover,
\[
\operatorname{tr}R
=
s-B
\geq0.
\]

Hence the positive eigenvalues of \(R\) have total sum at least \(s-B\). Since there are at most \(s\) of them, Cauchy--Schwarz gives
\[
\operatorname{tr}(R^2)
\geq
\frac{(s-B)^2}{s}.
\]

Combining the two estimates,
\[
\operatorname{tr}(G^2)
\geq
\frac{2(N-s+B)^2}{N-s}
+
\frac{(s-B)^2}{s}.
\]

Expanding the right-hand side,
\begin{equation}
\label{imp_eq}
    \operatorname{tr}(G^2)
\geq
2N-s
+
2B
+
B^2
\left(
\frac{2}{N-s}
+
\frac1s
\right).
\end{equation}

In particular,
\[
\boxed{
\operatorname{tr}(G^2)\geq2N-s.
}
\]

Finally,
\[
2N-s
\leq
\operatorname{tr}(G^2)
=
\left(\frac43+o(1)\right)N,
\]
and therefore
\[
\boxed{
s\geq
\left(\frac23-o(1)\right)N.
}
\]

Thus at least two thirds of the zeros are simple and lie on the critical line.
\subsection{Where did the clustering go?}

It is useful to return to the inequality \eqref{imp_eq}. Recall that
\[
B=\operatorname{tr}Q_-
\]
measures the total mass of the negative eigenvalues of \(Q\). Every term involving
\(B\) in \eqref{imp_eq} is positive. Thus any quantitative lower bound on \(B\)
would translate directly into an improvement of Claude's \(2/3\) result.

This raises a natural question. Every reflected pair of zeros off the critical
line contributes one positive and one negative direction. Why can we not simply
add these negative directions and obtain a large value of \(B\)?

The negative eigenvalues of the individual reflected pairs are not additive.
If the \(j\)-th pair contributes
\[
\lambda_j a_j a_j^T-\mu_j b_j b_j^T,
\]
then
\[
Q
=
\sum_j
\left(
\lambda_j a_j a_j^T-\mu_j b_j b_j^T
\right)
\]
also contains the positive directions \(a_j\). The positive direction coming
from one pair may point partly, or even almost entirely, in a negative direction
coming from another pair. After the matrices are added, these contributions can
cancel.

This is where the clustering obstruction survives. In the most effective
clustering scenario, the negative spectral mass can be almost completely
cancelled, so that \(B\) is close to zero. The bad zeros then leave essentially
one positive direction for every two zeros, carrying eigenvalue about \(2\). In
this extremal sense, \(Q\) behaves like \(2I\) on its surviving positive
subspace.

Thus Claude's argument does not eliminate clustering. It proves the \(2/3\)
result even in the worst case where clustering destroys essentially all of the
negative spectral mass. To go beyond \(2/3\), one would need to show that this
cancellation cannot be complete, so that some positive lower bound for \(B\)
must survive.
\section{The Recursive Identity \& Proof.} For \(k\geq1\), define
\[
\mathcal C_k(t)
:=
-\frac{2}{L}\sum_{n<T}\frac{\Lambda(n)}{\sqrt n}
\left(1-\frac{\log n}{L}\right)^k\cos(t\log n)
\]
and
\[
\Im_k(t)
:=
\frac{2}{L}\sum_{n<T}\frac{\Lambda(n)}{\sqrt n}
\left(1-\frac{\log n}{L}\right)^k\sin(t\log n).
\]

\begin{lemma}[Recursive identity]
\label{lem:recursion}
For every \(k\geq1\),
\[
\Im_k(t)-\Im_{k+1}(t)
=
\frac{1}{L}\mathcal C_k'(t).
\]
Consequently, for every integer \(M\geq1\),
\[
\Im Z\!\left(\frac12+it\right)
=
\frac1L\sum_{k=1}^M\mathcal C_k'(t)
+
\Im_{M+1}(t).
\]
Moreover, for \(M=\lceil L\rceil\), then
\[
\Im_{M+1}(t)\ll \frac1L .
\]
\end{lemma}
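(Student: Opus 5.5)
The first two assertions are algebraic identities obtained by comparing Dirichlet coefficients. The only analytic input is a crude bound for the tail term \(\Im_{M+1}\). Throughout write \(x_n=\log n/L\in[0,1)\) for \(n<T\).

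\textbf{Step 1 (the recursion).} Since \((1-x_n)^k-(1-x_n)^{k+1}=(1-x_n)^k x_n\), we get
\[
\Im_k(t)-\Im_{k+1}(t)=\frac{2}{L}\sum_{n<T}\frac{\Lambda(n)}{\sqrt n}\left(1-\frac{\log n}{L}\right)^k\frac{\log n}{L}\,\sin(t\log n).
\]
Differentiating \(\mathcal C_k\) term by term (it is a finite sum) and using \(\frac{d}{dt}\cos(t\log n)=-\log n\,\sin(t\log n)\) gives
\[
\mathcal C_k'(t)=\frac{2}{L}\sum_{n<T}\frac{\Lambda(n)}{\sqrt n}\left(1-\frac{\log n}{L}\right)^k\log n\,\sin(t\log n).
\]
Comparing the two displays gives \(\Im_k-\Im_{k+1}=\frac1L\mathcal C_k'\).

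\textbf{Step 2 (telescoping).} Writing \(n^{-1/2-it}=n^{-1/2}(\cos(t\log n)-i\sin(t\log n))\) in the definition of \(Z\), we see that \(\Im Z(\frac12+it)=\Im_1(t)\). Summing the recursion over \(k=1,\dots,M\) telescopes:
\[
\frac1L\sum_{k=1}^M\mathcal C_k'(t)=\Im_1(t)-\Im_{M+1}(t).
\]
This is the stated formula.

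\textbf{Step 3 (the tail bound).} This is the only step requiring an estimate, and I expect it to be the main, if mild, obstacle. Bounding \(|\sin|\le1\) and \(\Lambda(n)\le\log n\), we have
\[
|\Im_{M+1}(t)|\le\frac{2}{L}\sum_{n<T}f(n),
\qquad
f(u):=\frac{\log u}{\sqrt u}\left(1-\frac{\log u}{L}\right)^{M+1}.
\]
On \([1,T]\) the function \(f\) is nonnegative, vanishes at both endpoints, and has a bounded number of monotone pieces. Hence
\[
\sum_{n<T}f(n)=\int_1^T f(u)\,du+O\bigl(\max f\bigr).
\]
For the maximum, put \(u=e^{Lv}\) and use \(1-v\le e^{-v}\) together with \(M+1\ge L\). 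This gives \(f\le Lv\,e^{-Lv/2}e^{-Lv}\le Lv\,e^{-Lv}\ll1\). For the integral, the same substitution gives
\[
\int_1^T f(u)\,du=L^2\int_0^1 v\,e^{Lv/2}(1-v)^{M+1}\,dv\le L^2\int_0^\infty v\,e^{-Lv/2}\,dv=4.
\]
Thus \(\sum_{n<T}f(n)\ll1\), and therefore \(\Im_{M+1}(t)\ll 1/L\) uniformly in \(t\). The one point to verify is that the choice \(M=\lceil L\rceil\) is exactly what makes the decay \(e^{-(M+1)v}\) beat the growth \(e^{Lv/2}\) of \(n^{1/2}\). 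Any choice \(M\ge cL\) with \(c>1/2\) would suffice, at the cost of a constant.
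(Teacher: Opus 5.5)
Your proof is correct and follows the paper's argument. The same coefficient comparison gives the recursion, the same telescoping (with \(\Im Z(\tfrac12+it)=\Im_1(t)\)) gives the identity, and the tail is controlled through the same inequality \((1-\log n/L)^{M+1}\le n^{-(M+1)/L}\). The only difference is cosmetic: the paper bounds the resulting sum directly by the convergent series \(\frac{2}{L}\sum_{n\ge2}\Lambda(n)n^{-1-\varepsilon}\), where \(\varepsilon=\tfrac12\) works since \(M+1>L\). So your replacement of \(\Lambda(n)\) by \(\log n\) and the sum--integral comparison are valid but unnecessary.
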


\subsection{Higher-order Fej\'er kernels.} Put
\[
\widetilde S_k(z)
:=
2\int_0^1(1-u)^k\cos(2uz)\,du.
\]

\begin{lemma}[Higher-order Fej\'er identity]
\label{lem:Ck-zero}
Let \(T>e\), \(L=\log T\), and \(t\asymp T\). Then, for every integer
\(k\geq1\),
\[
\mathcal C_k(t)
=
-1+
\Re\sum_{\rho}
\widetilde S_k\!\left(
-\frac{iL}{2}
\left(\rho-\frac12-it\right)
\right)
+
O_k\!\left(\frac1L\right).
\]
Writing \(\rho=\beta+i\gamma\), this is
\[
\mathcal C_k(t)
=
-1+
\Re\sum_{\rho}
\widetilde S_k\!\left(
\frac{L}{2}
\left(
(\gamma-t)-i\left(\beta-\frac12\right)
\right)
\right)
+
O_k\!\left(\frac1L\right).
\]
More precisely, for \(1\leq k\leq 2L\), the error term may be
written as \(E_k(t)\), where
\[
E_k(t)\ll \frac1L,
\qquad
E_k'(t)\ll \frac1{TL},
\]
uniformly for \(t\asymp T\).
\end{lemma}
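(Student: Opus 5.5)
The plan is to prove the lemma with a Perron-type explicit formula adapted to the weight \((1-\log n/L)^k\), and then to show that the polynomial-in-\(1/(\rho-s)\) terms produced by the contour shift cancel exactly against the residue at \(w=0\). Put \(s=\tfrac12+it\). For \(c>1\) I start from
\[
\frac{1}{k!}\sum_{n<T}\frac{\Lambda(n)}{n^{s}}\Bigl(\log\frac Tn\Bigr)^k
=\frac1{2\pi i}\int_{(c)}\Bigl(-\frac{\zeta'}{\zeta}(s+w)\Bigr)\frac{T^w}{w^{k+1}}\,dw .
\]
Multiplying by \(k!/L^{k}\) makes the left side equal to \(\sum_{n<T}\Lambda(n)n^{-s}(1-\log n/L)^k\), and then
\[
\mathcal C_k(t)=-\tfrac2L\,\Re\sum_{n<T}\Lambda(n)n^{-s}(1-\log n/L)^k .
\]
I then shift the contour far to the left, along a standard sequence of horizontal segments avoiding zeros, and collect four kinds of residues:
\begin{itemize}
\item the pole at \(w=1-s\), which contributes \(\ll k!\,T^{1/2}/(L^k|s-1|^{k+1})\) and is negligible for \(t\asymp T\);
\item the trivial zeros, which contribute a negligible amount;
\item the zeros at \(w=\rho-s\), which contribute \(T^{\rho-s}/(\rho-s)^{k+1}\);
\item the pole of order \(k+1\) at \(w=0\), which contributes \(\sum_{j=0}^k \frac{L^{k-j}}{(k-j)!\,j!}\bigl(-\zeta'/\zeta\bigr)^{(j)}(s)\).
\end{itemize}

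Next I convert the zero terms into the kernel. With \(a=L(\rho-s)\), integration by parts gives
\[
\int_0^1(1-u)^ke^{au}\,du=\frac{k!\,e^a}{a^{k+1}}-\sum_{j=0}^{k}\frac{k!}{(k-j)!}\,a^{-j-1}.
\]
Hence
\[
\frac{k!}{L^k}\cdot\frac{T^{\rho-s}}{(\rho-s)^{k+1}}
=L\int_0^1(1-u)^k e^{uL(\rho-s)}\,du+\sum_{j=0}^k\frac{k!}{(k-j)!}\,\frac{L^{-j}}{(\rho-s)^{j+1}}.
\]
For the polynomial pieces I use the Hadamard product. Differentiating \(\zeta'/\zeta(s)=B+\sum_\rho\bigl(\frac1{s-\rho}+\frac1\rho\bigr)-\tfrac12\frac{\Gamma'}{\Gamma}(\tfrac s2+1)+\tfrac12\log\pi\) \(j\) times expresses \(\sum_\rho(\rho-s)^{-j-1}\) through \((\zeta'/\zeta)^{(j)}(s)\) plus digamma derivatives. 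With the normalization \(k!/L^k\), these zero sums cancel exactly against the residue at \(w=0\), term by term in \(j\). What survives are Gamma-factor terms. For \(j=0\), the real part (using \(\Re\sum_\rho 1/\rho\) and \(\Re B\)) leaves \(\tfrac12\log\frac{t}{2\pi}+O(1/t)\). After multiplying by \(-2/L\), this gives \(-\frac{\log(t/2\pi)}{L}=-1+O(1/L)\), since \(t\asymp T\). For \(j\ge1\), the terms are \(\ll \binom kj j!\,(tL)^{-j}\), whose sum is \(\ll k/(TL)\), hence negligible uniformly for \(k\le 2L\).

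It remains to reach the cosine form. I pair \(\rho\) with \(1-\bar\rho\), using the symmetric summation that the functional equation permits. Writing \(w=\rho-s\), one checks that \(1-\bar\rho-s=-\bar w\), and \(\Re e^{-uL\bar w}=\Re e^{-uLw}\). Averaging the two members of each pair therefore replaces \(e^{uLw}\) by \(\cosh(uLw)=\cos(-iuLw)=\cos(2uz)\) with \(z=-\tfrac{iL}{2}(\rho-s)\). Combined with the factor \(-2/L\cdot(-L)=2\), this yields \(\Re\sum_\rho\widetilde S_k(z)\) and hence the stated formula. For the derivative bound \(E_k'(t)\ll 1/(TL)\), I note that the error consists of \(\frac{\log(t/2\pi)-L}{L}\) (whose derivative is \(1/(tL)\)), the digamma terms (analytic in \(t\), with derivatives smaller by a factor of \(1/t\)), and the negligible residues. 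All of these can be differentiated explicitly, and the differentiation introduces a factor of size \(1/t\).

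I expect the main obstacle to be rigor and uniformity rather than algebra. The contour shift must be justified with \(k\) as large as \(2L\), where \(k!/L^k\) is no longer bounded. The horizontal segments and the far-left line must be controlled by \(|T^w/w^{k+1}|\) uniformly in \(k\). The symmetric zero sum must be shown to converge after pairing, since the individual kernel terms \(\widetilde S_k(z)\) only decay like \(|z|^{-2}\) on the real axis. Finally, the exact \(j\)-by-\(j\) cancellation against the \(w=0\) residue must be verified with the correct constants. I would handle the first issue by taking the horizontal segments at heights \(T_\nu\) with \(|\zeta'/\zeta|\ll\log^2T_\nu\) and letting the real part go to \(-\infty\), where \(T^w\) decays exponentially.
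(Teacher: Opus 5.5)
Your proof is correct, but it takes a different route from the paper. You use the one-sided Riesz kernel $T^w/w^{k+1}$ and shift to $\Re w=-\infty$, which leaves an order-$(k+1)$ residue at $w=0$ to dispose of. You handle it as follows:
\begin{itemize}
\item expand $\frac{k!}{L^k}T^{\rho-s}(\rho-s)^{-k-1}$ by integration by parts;
\item cancel the resulting polynomial pieces against the $j$-th derivatives of the Hadamard expansion;
\item keep the Gamma factor as derivatives of $\frac12\psi(\frac s2+1)$ at the single point $s$;
\item obtain the $\cosh$ kernel only at the end, by pairing $\rho$ with $1-\bar\rho$.
\end{itemize}

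The paper instead integrates against the even entire kernel $G_k(w)=L\int_{-1}^1(1-|u|)^ke^{Luw}\,du$ on the two lines $\Re s=c$ and $\Re s=1-c$. It reflects the left line by the functional equation to get $\overline{P_k(t)}$ plus an integral of $\chi'/\chi$ against $\widetilde S_k$, and evaluates that integral by Stirling. This has several advantages: there is no residue at $s_0$, no Hadamard product and no $\sum_\rho1/\rho$ bookkeeping. The zero sum is absolutely convergent from the outset, and the symmetrization is built into the kernel. The identity holds at every $t$, and the same scheme carries over directly to the odd kernel in Lemma~\ref{lem:odd-tail}.

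Your version, in exchange, gives $E_k(t)$ in closed form: $\frac{L-\log(t/2\pi)}{L}$ plus a finite digamma sum whose terms are $\ll(k/(Lt))^j$. This makes the uniformity for $k\le2L$ and the bound $E_k'(t)\ll1/(TL)$ more transparent than the paper's ``repeat after differentiation'' step.

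A few points to tidy up:
\begin{itemize}
\item The zero residues carry a minus sign; your factor $-\tfrac2L\cdot(-L)=2$ already assumes it.
\item The Hadamard expansion needs the term $-\frac1{s-1}$. Its derivative terms combine with the residue at $w=1-s$ into $L\int_0^1(1-u)^ke^{uL(1-s)}\,du=O(1/T)$, so nothing changes.
\item When $t$ is the ordinate of a zero on the critical line, the poles at $w=0$ and $w=\rho-s$ merge. There you should conclude by continuity of both sides in $t$.
\item The contour shift needs no uniformity in $k$ at all, since for fixed $k$ and $T$ it is an exact identity. Uniformity enters only in bounding the explicit leftover terms, which you do correctly.
\end{itemize}
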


\begin{lemma}[Odd-kernel tail identity]
\label{lem:odd-tail}
For an integer \(M\geq1\), put
\[
B_M(z):=A_\infty(z)-A_M(z)
=2\int_0^1(1-u)^{M+1}\sin(uz)\,du.
\]
Then, uniformly for \(t\asymp T\),
\[
\Re\sum_\rho
B_M\!\left(-iL\left(\rho-\frac12-it\right)\right)
=
\Im_{M+1}(t)+O\!\left(\frac1L\right),
\]
where the zero sum is taken symmetrically.  In particular, if
\(M\geq(1/2+\varepsilon)L\), then the left-hand side is
\(O_\varepsilon(L^{-1})\).
\end{lemma}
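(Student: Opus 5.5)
The plan is to prove the identity directly from the explicit formula applied to a truncated Laplace-type weight. It does not go through the recursion of Lemma~\ref{lem:recursion}.

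\textbf{Setting up the test function.} Put \(w(x)=\frac{2}{L}\bigl(1-\frac{x}{L}\bigr)^{M+1}\) for \(0\le x\le L\) and \(w(x)=0\) otherwise. Then
\[
\Im_{M+1}(t)=-\Im\sum_{n}\frac{\Lambda(n)}{n^{1/2+it}}\,w(\log n),
\]
since \(\Im n^{-it}=-\sin(t\log n)\). With \(s=\tfrac12+it\), I would write this Dirichlet series as a contour integral of \(-\frac{\zeta'}{\zeta}(s+v)\) against the Laplace transform \(W(v)=\int_0^L w(x)e^{-vx}\,dx\), taken on a line \(\Re v=c>1/2\). I would then shift the contour to the left.

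The residues are as follows.
\begin{itemize}
\item Each nontrivial zero \(\rho\) contributes \(-\widehat w(\rho-s)\), where
\[
\widehat w(\rho-s)=\int_0^L w(x)e^{x(\rho-s)}\,dx=2\int_0^1(1-u)^{M+1}e^{uL(\rho-s)}\,du .
\]
\item The pole of \(\zeta\) at \(1\) contributes \(\widehat w(1-s)\).
\item The trivial zeros and the archimedean (Gamma-factor) term contribute the rest.
\end{itemize}

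\textbf{Extracting the odd kernel.} With \(z=-iL(\rho-s)\) we have \(e^{uL(\rho-s)}=e^{iuz}=\cos(uz)+i\sin(uz)\) for complex \(z\). Hence \(\widehat w(\rho-s)=C(z)+iB_M(z)\), where \(C\) is the cosine (Fejér-type) transform of the same weight.

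Taking imaginary parts, the zero side becomes \(\Im\sum_\rho\bigl(C(z_\rho)+iB_M(z_\rho)\bigr)\). Pair each \(\rho\) with \(1-\bar\rho\), which maps \(z\mapsto\bar z\), and use that \(C\) and \(B_M\) have real Taylor coefficients. Under this pairing the \(C\)-terms contribute \(\Im(C(z)+C(\bar z))=0\), and the \(B_M\)-terms give exactly \(\Re\bigl(B_M(z)+B_M(\bar z)\bigr)\). This yields \(\Re\sum_\rho B_M\bigl(-iL(\rho-\tfrac12-it)\bigr)\), with the overall sign matching the minus sign above. The summation must be symmetric for this pairing to be legitimate.

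\textbf{Error terms.} There are three.
\begin{itemize}
\item The archimedean term is \(\frac{1}{2}\frac{\Gamma'}{\Gamma}\) evaluated near height \(t\). It equals \(\frac12\log\frac{t}{2\pi}+O(1/t)\), which is real to leading order. Since it multiplies the real number \(\int w=W(0)\), its imaginary part is \(O(1/T)\).
\item For the pole term \(\widehat w(1-s)=\int_0^L w(x)e^{x(1/2-it)}\,dx\), I integrate by parts once. The boundary term at \(x=L\) vanishes because \(w(L)=0\). The boundary term at \(x=0\) is \(\ll 1/(L|t|)\). The remaining integral \(\int|w'|e^{x/2}/|t|\) is \(\ll e^{L/2}/(LT)\ll T^{-1/2}\).
\item The trivial zeros contribute \(O(1/T)\) in the same way.
\end{itemize}
All of these are well within \(O(1/L)\).

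\textbf{Main obstacle.} The hardest step is justifying the contour shift. The weight \(w\) has a jump at \(x=0\) (indeed \(w(0)=2/L\)), so \(W(v)\) decays only like \(1/|v|\) and the zero sum converges only conditionally. I would handle this as in the proof of Lemma~\ref{lem:Ck-zero}. Truncate the Perron integral at heights \(\pm U\), chosen between zeros, and bound \(\zeta'/\zeta\) on horizontal segments by \(\ll\log^2 U\) via the usual partial-fraction estimate. Then let \(U\to\infty\); this is exactly what produces the symmetric ordering of the zero sum. Uniformity in \(M\) comes from \(|w|\le 2/L\), \(\int|w'|\le 2/L\), and \(w(L)=0\). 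Since these bounds do not depend on \(M\), the \(O(1/L)\) is uniform.

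\textbf{The final claim.} If \(M\ge(1/2+\varepsilon)L\), then \((1-\log n/L)^{M+1}\le e^{-(M+1)\log n/L}\le n^{-1/2-\varepsilon}\). Hence
\[
|\Im_{M+1}(t)|\le\frac{2}{L}\sum_n\Lambda(n)\,n^{-1-\varepsilon}\ll_\varepsilon\frac1L .
\]
By the identity just proved, the left-hand side is therefore \(O_\varepsilon(L^{-1})\).
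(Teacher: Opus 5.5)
\textbf{Comparison with the paper.} Your route differs from the paper's, and the overall strategy works.

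The paper builds the odd entire kernel \(H_M(w)=LB_M(-iLw)\), whose inverse transform is the two-sided odd weight \(\operatorname{sgn}(u)(1-|u|)^{M+1}\) on \([-1,1]\). It runs the two-line argument of Lemma~\ref{lem:Ck-zero} and reflects the left line by the functional equation, which yields the complex conjugate of the right-hand prime sum. The prime side is then already proportional to \(\Im P_{M+1}\), and one takes real parts.

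You instead use the one-sided weight on \([0,L]\). The zero side is then the full transform \(C+iB_M\) with \(C(z)=\widetilde S_{M+1}(z/2)\). You take imaginary parts and use \(\rho\mapsto 1-\bar\rho\) (i.e.\ \(z\mapsto\bar z\)) to remove \(\Im\sum_\rho C(z_\rho)\).

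What your version buys is a single explicit formula: its real part is the case \(k=M+1\) of Lemma~\ref{lem:Ck-zero}, and its imaginary part is this lemma. Moreover, for a weight supported on \(x\ge0\), the reflected Dirichlet series on the left line vanishes identically, since its terms are the inverse transform evaluated at \(-\log n<0\). The functional equation therefore enters only through \(\chi'/\chi\) and the symmetry of the zeros. The paper's version instead reuses the machinery of Lemma~\ref{lem:Ck-zero} essentially verbatim.

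There is a sign slip: your residues \(-\widehat w(\rho-s)\) require \(W(v)=\int_0^L w(x)e^{vx}\,dx\), not \(e^{-vx}\).

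\textbf{The archimedean estimate is wrong as written.} You claim its imaginary part is \(O(1/T)\) because it is roughly \(\tfrac12\log\tfrac{t}{2\pi}\) times the real number \(\int w=W(0)\). Neither half of this is right.
\begin{itemize}
\item \(\tfrac12\frac{\Gamma'}{\Gamma}(\tfrac s2)=\tfrac12\log\tfrac s2+O(1/t)\) has imaginary part close to \(\pi/4\), not \(O(1/t)\).
\item More fundamentally, \(w\) jumps at \(0\), so \(\Im W(i\tau)\sim w(0)/\tau\). The archimedean integral is therefore not localized at height \(t\), and its size is governed by \(\frac1{2\pi}\int W(i\tau)\,d\tau=\tfrac12 w(0)\), not by \(\int w\).
\end{itemize}

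Here is the correct treatment. Stop the contour at \(\Re(s+v)=-\delta\) and apply the functional equation. Then move the \(\chi'/\chi\) integral to \(\Re(s+v)=\tfrac12\), where \(\frac{\chi'}{\chi}(\tfrac12+iy)=-\log\frac{|y|}{2\pi}+O((1+y^2)^{-1})\) is real. Up to negligible terms, the imaginary part of the archimedean contribution is
\[
\frac{1}{2\pi}\int_{-\infty}^{\infty}\log|t+\tau|\,\Im W(i\tau)\,d\tau=\frac12\int_0^L w(x)\,\frac{\sin(tx)}{x}\,dx,
\]
using \(\int_0^\infty\log\bigl|\frac{t+\tau}{t-\tau}\bigr|\sin(x\tau)\,d\tau=\pi\sin(tx)/x\).

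Since \(w\) is nonincreasing with \(w(0)=2/L\), this is \(\ll 1/L\) uniformly in \(M\), so the lemma survives. However, for \(M\asymp L\) it equals \(\frac{\pi}{2L}(1+o(1))\). This term is thus of exact order \(1/L\), not \(O(1/T)\), and it is the one that forces the \(O(1/L)\) in the statement.

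Similarly, there is no separate trivial-zero term in this setup. If you shift past \(\Re(s+v)=-\delta\), the residues \(W(-2k-s)\approx w(0)/(2k+s)\) are individually small but not summable, so they cannot be bounded term by term.
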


\subsection{Proof of the main theorem}

\begin{proof}[Proof of Theorem~\ref{thm:ImZ-limit-kernel}]
Put
\[
z_\rho(t)
:=
-iL\left(\rho-\frac12-it\right).
\]
By Lemma~\ref{lem:recursion},
\[
\Im Z\!\left(\frac12+it\right)
=
\frac1L\sum_{k=1}^{M}\mathcal C_k'(t)
+
\Im_{M+1}(t).
\]
Choose \(M\asymp L\).  Then
\[
\Im_{M+1}(t)\ll \frac1L.
\]

By Lemma~\ref{lem:Ck-zero},
\[
\mathcal C_k(t)
=
-1+
\Re\sum_{\rho}
\widetilde S_k\!\left(\frac{z_\rho(t)}2\right)
+
E_k(t),
\]
where
\[
E_k'(t)\ll \frac1{TL}
\]
uniformly for \(1\leq k\leq M\).
Since
\[
z_\rho'(t)=-L,
\]
we obtain
\[
\frac1L\mathcal C_k'(t)
=
-\frac12
\Re\sum_{\rho}
\widetilde S_k'\!\left(\frac{z_\rho(t)}2\right)
+
\frac1L E_k'(t).
\]
Hence
\[
\frac1L\sum_{k=1}^{M}\mathcal C_k'(t)
=
\Re\sum_{\rho}A_M\bigl(z_\rho(t)\bigr)
+
O\!\left(\frac1{TL}\right),
\]
where
\[
A_M(z)
:=
-\frac12\sum_{k=1}^{M}
\widetilde S_k'\!\left(\frac z2\right).
\]

Using
\[
\widetilde S_k\!\left(\frac z2\right)
=
2\int_0^1(1-u)^k\cos(uz)\,du,
\]
we find
\[
\begin{aligned}
A_M(z)
&=
2\int_0^1
u\sum_{k=1}^{M}(1-u)^k\sin(uz)\,du\\
&=
2\int_0^1
(1-u)\bigl(1-(1-u)^M\bigr)\sin(uz)\,du.
\end{aligned}
\]
It remains to replace \(A_M\) by \(A_\infty\).  By
Lemma~\ref{lem:odd-tail},
\[
\Re\sum_{\rho}
\bigl(A_\infty-A_M\bigr)
\!\left(-iL\left(\rho-\frac12-it\right)\right)
=O\!\left(\frac1L\right)
\]
for our choice \(M\asymp L\).  Hence
\[
\Re\sum_{\rho}
A_M\!\left(-iL\left(\rho-\frac12-it\right)\right)
=
\Re\sum_{\rho}
A_\infty\!\left(-iL\left(\rho-\frac12-it\right)\right)
+O\!\left(\frac1L\right).
\]

Consequently,
\[
\Im Z\!\left(\frac12+it\right)
=
\Re\sum_{\rho}
A_\infty\!\left(
-iL\left(\rho-\frac12-it\right)
\right)
+
O\!\left(\frac1L\right).
\]
This is unconditional.  If the Riemann Hypothesis is assumed, then
\(\rho=\frac12+i\gamma\), and the identity reduces to
\[
\Im Z\!\left(\frac12+it\right)
=
\sum_{\gamma}
A_\infty\!\big((\gamma-t)L\big)
+
O\!\left(\frac1L\right).
\]
\end{proof}

\subsection{Proofs of the auxiliary lemmas}

\begin{proof}[Proof of Lemma~\ref{lem:recursion}]
Differentiating \(\mathcal C_k(t)\), we obtain
\[
\frac1L\mathcal C_k'(t)
=
\frac{2}{L}\sum_{n<T}\frac{\Lambda(n)}{\sqrt n}
\frac{\log n}{L}
\left(1-\frac{\log n}{L}\right)^k
\sin(t\log n).
\]
On the other hand,
\[
\begin{aligned}
\Im_k(t)-\Im_{k+1}(t)
&=
\frac{2}{L}\sum_{n<T}\frac{\Lambda(n)}{\sqrt n}
\left[
\left(1-\frac{\log n}{L}\right)^k
-
\left(1-\frac{\log n}{L}\right)^{k+1}
\right]
\sin(t\log n)\\
&=
\frac{2}{L}\sum_{n<T}\frac{\Lambda(n)}{\sqrt n}
\frac{\log n}{L}
\left(1-\frac{\log n}{L}\right)^k
\sin(t\log n),
\end{aligned}
\]
which proves
\[
\Im_k(t)-\Im_{k+1}(t)=\frac1L\mathcal C_k'(t).
\]
Summing over \(1\leq k\leq M\) gives the telescoping identity.

Finally,
\[
\left(1-\frac{\log n}{L}\right)^{M+1}
\leq
n^{-(M+1)/L},
\]
and hence, if \(M\geq(1/2+\varepsilon)L\),
\[
|\Im_{M+1}(t)|
\leq
\frac2L
\sum_{n\geq2}
\frac{\Lambda(n)}{n^{1+\varepsilon}}
\ll_\varepsilon\frac1L.
\]
\end{proof}

\begin{proof}[Proof of Lemma~\ref{lem:odd-tail}]
Set \(s_0=\frac12+it\) and introduce the odd entire transform
\[
H_M(w):=L B_M(-iLw)
=-2iL\int_0^1(1-u)^{M+1}\sinh(Luw)\,du.
\]
Apply to \(H_M(s-s_0)\) the same two-line contour argument used in
Lemma~\ref{lem:Ck-zero}.  On the line \(\Re s>1\), expanding
\(\zeta'/\zeta\) into its Dirichlet series and writing the hyperbolic
sine as a difference of exponentials gives the prime polynomial with
weight \((1-\log n/L)^{M+1}\).  Reflecting the left-hand line by the
functional equation gives its complex conjugate.  Taking the real part
of the residue identity therefore yields
\[
\Re\sum_\rho B_M\!\left(-iL(\rho-s_0)\right)
=
\frac{2}{L}\sum_{n<T}\frac{\Lambda(n)}{\sqrt n}
\left(1-\frac{\log n}{L}\right)^{M+1}
\sin(t\log n)
+O\!\left(\frac1L\right).
\]
The sum on the right is precisely \(\Im_{M+1}(t)\).  The pole and
Gamma-factor terms are \(O(L^{-1})\), by the same Stirling estimates as
in Lemma~\ref{lem:Ck-zero}.  Finally, if
\(M\geq(1/2+\varepsilon)L\), Lemma~\ref{lem:recursion} gives
\(\Im_{M+1}(t)\ll_\varepsilon L^{-1}\).
\end{proof}

\begin{proof}[Proof of Lemma~\ref{lem:Ck-zero}]
This is a higher-order extension of the unconditional Fej\'er-kernel
identity proved by the author in \cite{Ary22}, corresponding to the case
\(k=1\).  Under the Riemann Hypothesis, the \(k=1\) identity was proved
earlier by Gonek \cite{Gonek}. \\

We briefly describe the architecture of the argument.  We begin with a
contour on the right of the critical strip, where the logarithmic derivative
of the zeta function can be expanded as an absolutely convergent Dirichlet
series.  We then move the contour to the left of the critical strip.  The
difference between the two contour integrals is given by the residues of the
non-trivial zeros, together with the contribution of the pole of the zeta
function.  On the left-hand contour we use the functional equation for the
logarithmic derivative to reflect the zeta term back to the right-hand
half-plane, where it can once again be expanded into a Dirichlet series.
The remaining term arising from the functional equation is the Gamma-factor
contribution.  This is treated separately by Stirling's formula, exactly as
in the \(k=1\) argument.  In this way no assumption on the location of the
non-trivial zeros is required.\\

Put
\[
s_0:=\frac12+it
\]
and, for \(k\geq1\), define
\begin{equation}
\label{eq:Gk-def}
G_k(w)
:=
L\int_{-1}^{1}(1-|u|)^k e^{Luw}\,du.
\end{equation}
The function \(G_k\) is entire and even.  Moreover,
\begin{equation}
\label{eq:Gk-Sk-complex}
\frac1L G_k(w)
=
\widetilde S_k\!\left(-\frac{iLw}{2}\right).
\end{equation}
Indeed,
\[
\frac1L G_k(w)
=
2\int_0^1(1-u)^k\cosh(Luw)\,du
=
2\int_0^1(1-u)^k
\cos\!\left(-iLuw\right)\,du.
\]

Let \(c=1+\delta\), where \(0<\delta<1\), and consider
\[
I_k^{+}(t)
:=
\frac{1}{2\pi i}
\int_{(c)}
\frac{\zeta'}{\zeta}(s)
G_k(s-s_0)\,ds.
\]
Since \(c>1\), we may use the absolutely convergent expansion
\[
\frac{\zeta'}{\zeta}(s)
=
-\sum_{n=2}^{\infty}\frac{\Lambda(n)}{n^s}.
\]
For \(n>1\), Perron's formula gives
\begin{equation}
\label{eq:Gk-Perron}
\frac{1}{2\pi i}
\int_{(c)}
n^{-s}G_k(s-s_0)\,ds
=
\begin{cases}
\displaystyle
n^{-s_0}
\left(1-\frac{\log n}{L}\right)^k,
& n<T,\\[3mm]
0,&n>T.
\end{cases}
\end{equation}
Consequently,
\begin{equation}
\label{eq:Iplus}
I_k^{+}(t)
=
-\sum_{n<T}\frac{\Lambda(n)}{n^{s_0}}
\left(1-\frac{\log n}{L}\right)^k.
\end{equation}
If we write
\[
P_k(t)
:=
\sum_{n<T}\frac{\Lambda(n)}{n^{s_0}}
\left(1-\frac{\log n}{L}\right)^k,
\]
then
\begin{equation}
\label{eq:Ck-P}
\mathcal C_k(t)
=
-\frac{2}{L}\Re P_k(t),
\qquad
I_k^{+}(t)=-P_k(t).
\end{equation}

We next introduce the corresponding integral on the line
\(\Re s=1-c=-\delta\):
\[
I_k^{-}(t)
:=
\frac{1}{2\pi i}
\int_{(1-c)}
\frac{\zeta'}{\zeta}(s)
G_k(s-s_0)\,ds.
\]
By the residue theorem, the difference between the two contour
integrals is
\begin{equation}
\label{eq:residue-strip}
I_k^{+}(t)-I_k^{-}(t)
=
\sum_{\rho}G_k(\rho-s_0)
-
G_k(1-s_0).
\end{equation}
Here the sum is over all non-trivial zeros of \(\zeta\), counted with
multiplicity.  The second term comes from the pole of \(\zeta\) at
\(s=1\), whose residue in \(\zeta'/\zeta\) is \(-1\).
Since \(0<\delta<1\), no trivial zero lies between the two vertical
lines. \\

We now treat \(I_k^{-}(t)\) by the functional equation.  Using
\begin{equation}
\label{eq:FE-logder}
\frac{\zeta'}{\zeta}(s)
=
\frac{\chi'}{\chi}(s)
-
\frac{\zeta'}{\zeta}(1-s),
\end{equation}
we obtain
\begin{equation}
\label{eq:Iminus-FE}
I_k^{-}(t)
=
\Gamma_k(t)-J_k(t),
\end{equation}
where
\[
\Gamma_k(t)
:=
\frac{1}{2\pi i}
\int_{(1-c)}
\frac{\chi'}{\chi}(s)
G_k(s-s_0)\,ds
\]
and
\[
J_k(t)
:=
\frac{1}{2\pi i}
\int_{(1-c)}
\frac{\zeta'}{\zeta}(1-s)
G_k(s-s_0)\,ds.
\]

In \(J_k(t)\), make the change of variables \(z=1-s\).  Since \(G_k\)
is even and
\[
1-s_0=\frac12-it=\overline{s_0},
\]
we obtain
\[
J_k(t)
=
\frac{1}{2\pi i}
\int_{(c)}
\frac{\zeta'}{\zeta}(z)
G_k(z-\overline{s_0})\,dz.
\]
We may again expand \(\zeta'/\zeta\) into its Dirichlet series, and the
same calculation as in \eqref{eq:Gk-Perron} gives
\[
J_k(t)
=
-\sum_{n<T}\frac{\Lambda(n)}{n^{\overline{s_0}}}
\left(1-\frac{\log n}{L}\right)^k
=
-\overline{P_k(t)}.
\]
Thus
\begin{equation}
\label{eq:Iminus-final}
I_k^{-}(t)
=
\Gamma_k(t)+\overline{P_k(t)}.
\end{equation}

Substituting \eqref{eq:Iplus} and \eqref{eq:Iminus-final} into
\eqref{eq:residue-strip}, we find
\[
-P_k(t)-\overline{P_k(t)}-\Gamma_k(t)
=
\sum_{\rho}G_k(\rho-s_0)-G_k(1-s_0).
\]
Taking real parts,
\[
-2\Re P_k(t)
=
\Re\sum_{\rho}G_k(\rho-s_0)
-
\Re G_k(1-s_0)
+
\Re\Gamma_k(t).
\]
Therefore, by \eqref{eq:Ck-P},
\begin{equation}
\label{eq:Ck-pre-gamma}
\mathcal C_k(t)
=
\frac1L
\Re\sum_{\rho}G_k(\rho-s_0)
-
\frac1L\Re G_k(1-s_0)
+
\frac1L\Re\Gamma_k(t).
\end{equation}

It remains to evaluate the Gamma-factor contribution.  As in the proof
of the corresponding lemmas in the Landau--Gonek formula, we move the
line in \(\Gamma_k(t)\) from \(\Re s=1-c\) to \(\Re s=1/2\).  The
possible contribution from the singularity at \(s=0\) is negligible for
\(t\asymp T\).  Hence
\[
\Gamma_k(t)
=
\frac{1}{2\pi}
\int_{-\infty}^{\infty}
\frac{\chi'}{\chi}\left(\frac12+i\tau\right)
G_k\bigl(i(\tau-t)\bigr)\,d\tau
+
O_k(T^{-1}).
\]
Using
\[
G_k(iv)
=
L\widetilde S_k\left(\frac{Lv}{2}\right)
\]
and making the change of variables
\[
x=\frac{L(\tau-t)}2,
\]
we obtain
\begin{equation}
\label{eq:Gamma-Sk}
\Gamma_k(t)
=
\frac1\pi
\int_{-\infty}^{\infty}
\widetilde S_k(x)
\frac{\chi'}{\chi}
\left(
\frac12+i\left(t+\frac{2x}{L}\right)
\right)\,dx
+
O_k(T^{-1}).
\end{equation}

On the critical line, For \(|y|\geq2\), Stirling's formula gives
\[
\frac{\chi'}{\chi}\left(\frac12+iy\right)
=
-\log\frac{|y|}{2\pi}
+
O\left(\frac{1}{1+y^2}\right).
\]
Moreover,
\[
\int_{-\infty}^{\infty}\widetilde S_k(x)\,dx=\pi,
\]
while
\[
\widetilde S_k(x)\ll_k \frac{1}{1+x^2}.
\]
It follows from \eqref{eq:Gamma-Sk}, uniformly for \(t\asymp T\), that
\begin{equation}
\label{eq:Gamma-main}
\Gamma_k(t)
=
-\log\frac{t}{2\pi}+O_k(1).
\end{equation}
Moreover, these estimates may be made uniform for
\(1\leq k\leq 2L\).  For real \(x\),
\[
0\leq \widetilde S_k(x)
\ll
\min\left\{\frac{1}{k+1},\frac{k}{1+x^2}\right\},
\qquad
\int_{\mathbb R}\widetilde S_k(x)\,dx=\pi.
\]
Repeating the preceding argument after differentiation with respect to
\(t\), and using Stirling's formula once more, gives
\[
\Gamma_k'(t)\ll \frac1T
\]
uniformly for \(1\leq k\leq 2L\) and \(t\asymp T\).  Similarly, from
\eqref{eq:Gk-def}, by integration by parts,
\[
G_k(1-s_0)=O(1),
\qquad
\frac{d}{dt}G_k(1-s_0)\ll\frac1T,
\]
uniformly in the same range.

Now define
\[
E_k(t)
:=
1-\frac1L\Re G_k(1-s_0)
+\frac1L\Re\Gamma_k(t).
\]
Since
\[
\Gamma_k(t)
=
-\log\frac{t}{2\pi}+O(1)
\]
uniformly in this range, and \(t\asymp T\), we have
\[
E_k(t)\ll\frac1L,
\qquad
E_k'(t)\ll\frac1{TL}.
\]
Thus \eqref{eq:Ck-pre-gamma} becomes
\[
\mathcal C_k(t)
=
-1+
\frac1L\Re\sum_{\rho}G_k(\rho-s_0)
+
E_k(t).
\]
Finally, by \eqref{eq:Gk-Sk-complex},
\[
\frac1L G_k(\rho-s_0)
=
\widetilde S_k\left(
-\frac{iL}{2}(\rho-s_0)
\right),
\]
and therefore
\[
\mathcal C_k(t)
=
-1+
\Re\sum_{\rho}
\widetilde S_k\left(
-\frac{iL}{2}
\left(\rho-\frac12-it\right)
\right)
+
E_k(t),
\]
where
\[
E_k(t)\ll\frac1L,
\qquad
E_k'(t)\ll\frac1{TL}.
\]
This proves the lemma.
\end{proof}

\section{The Imaginary-Part Identity at a Shorter Scale}

We record the form of the imaginary-part identity needed below.
Fix \(0<\alpha\leq1\), put
\[
L:=\log T,
\qquad
L_\alpha:=\alpha L=\log T^\alpha,
\]
and define
\[
Z_\alpha(s)
:=
-\frac{2}{L_\alpha}
\sum_{n<T^\alpha}
\frac{\Lambda(n)}{n^s}
\left(
1-\frac{\log n}{L_\alpha}
\right).
\]

\begin{proposition}[Short-truncation imaginary identity]
\label{prop:short-imaginary}
Let \(0<\alpha\leq1\) be fixed and \(t\asymp T\). Then
\[
\Im Z_\alpha\!\left(\frac12+it\right)
=
\Re\sum_\rho
A_\infty\!\left(
-iL_\alpha
\left(\rho-\frac12-it\right)
\right)
+
O_\alpha\!\left(\frac1L\right),
\]
where
\[
A_\infty(z)
=
2\int_0^1(1-u)\sin(uz)\,du
=
\frac{2(z-\sin z)}{z^2}.
\]
The sum over the non-trivial zeros is understood symmetrically.

In particular, under the Riemann Hypothesis,
\[
\Im Z_\alpha\!\left(\frac12+it\right)
=
\sum_\gamma
A_\infty\!\left(
\alpha L(\gamma-t)
\right)
+
O_\alpha\!\left(\frac1L\right).
\]
\end{proposition}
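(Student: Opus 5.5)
The plan is to repeat the proof of Theorem~\ref{thm:ImZ-limit-kernel} verbatim with \(T\) replaced by \(T^\alpha\) in the truncation, keeping the height \(t\asymp T\). Define the shorter analogues
\[
\mathcal C_{k,\alpha}(t):=-\frac{2}{L_\alpha}\sum_{n<T^\alpha}\frac{\Lambda(n)}{\sqrt n}\Bigl(1-\frac{\log n}{L_\alpha}\Bigr)^k\cos(t\log n),
\]
and \(\Im_{k,\alpha}(t)\) in the same way with \(\sin\) and a plus sign. The computation in Lemma~\ref{lem:recursion} is purely algebraic, so it gives
\[
\Im_{k,\alpha}-\Im_{k+1,\alpha}=\frac{1}{L_\alpha}\mathcal C_{k,\alpha}'(t).
\]
Telescoping then yields
\[
\Im Z_\alpha\!\left(\tfrac12+it\right)=\frac1{L_\alpha}\sum_{k\le M}\mathcal C_{k,\alpha}'(t)+\Im_{M+1,\alpha}(t).
\]
For the tail, the bound \((1-\log n/L_\alpha)^{M+1}\le n^{-(M+1)/L_\alpha}\) with \(M\ge (1/2+\varepsilon)L_\alpha\) gives \(\Im_{M+1,\alpha}\ll 1/L_\alpha\ll_\alpha 1/L\). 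We take \(M=\lceil L_\alpha\rceil\).

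Next we need the analogue of Lemma~\ref{lem:Ck-zero} at scale \(L_\alpha\). Use the kernel
\[
G_{k,\alpha}(w)=L_\alpha\int_{-1}^1(1-|u|)^k e^{L_\alpha uw}\,du .
\]
By Perron's formula this kernel cuts off at \(n<T^\alpha\). The two-line contour argument, the functional equation and the reflection \(1-s_0=\overline{s_0}\) go through unchanged. They give
\[
\mathcal C_{k,\alpha}(t)=\frac1{L_\alpha}\Re\sum_\rho G_{k,\alpha}(\rho-s_0)-\frac1{L_\alpha}\Re G_{k,\alpha}(1-s_0)+\frac1{L_\alpha}\Re\Gamma_{k,\alpha}(t).
\]

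The main obstacle is the Gamma-factor term. The mismatch between the truncation length \(L_\alpha\) and the height \(t\asymp T\) breaks the clean \(-1\) constant. Stirling and \(\int\widetilde S_k=\pi\) give
\[
\Gamma_{k,\alpha}(t)=-\log\frac{t}{2\pi}+O(1),
\]
so the constant term becomes \(-\log t/L_\alpha=-1/\alpha+O_\alpha(1/L)\) rather than \(-1\). This does not matter, because only \(\mathcal C_{k,\alpha}'\) enters: the constant disappears on differentiation. What must actually be checked is the uniform derivative bound \(E_{k,\alpha}'(t)\ll 1/(TL)\) for \(1\le k\le 2L_\alpha\). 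It follows as before, since
\[
\frac{d}{dt}\Gamma_{k,\alpha}\ll\frac1T,\qquad \frac{d}{dt}G_{k,\alpha}(1-s_0)\ll\frac1T .
\]
The second bound comes from integration by parts in the \(G\) integral. Summed over \(M\ll L\) values of \(k\), these errors contribute \(O(1/T)\).

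Finally, differentiate the zero sum using \(z_\rho'(t)=-L_\alpha\). The geometric sum \(\sum_{k\le M}u(1-u)^k=(1-u)(1-(1-u)^M)\) then produces
\[
A_M(z)=2\int_0^1(1-u)\bigl(1-(1-u)^M\bigr)\sin(uz)\,du
\]
evaluated at \(-iL_\alpha(\rho-s_0)\). To replace \(A_M\) by \(A_\infty\), run the contour argument of Lemma~\ref{lem:odd-tail} with the odd kernel
\[
L_\alpha B_M(-iL_\alpha w),\qquad B_M(z)=2\int_0^1(1-u)^{M+1}\sin(uz)\,du .
\]
This identifies \(\Re\sum_\rho B_M\) with \(\Im_{M+1,\alpha}(t)+O_\alpha(1/L)\). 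Here the Gamma term is again harmless: the kernel is odd, so its integral against the even main term \(-\log(|y|/2\pi)\) vanishes to leading order. Since \(\Im_{M+1,\alpha}(t)=O_\alpha(1/L)\) by the tail bound, the replacement is justified and the unconditional identity follows. Under RH one has \(-iL_\alpha(\rho-s_0)=\alpha L(\gamma-t)\), which is real, so taking real parts is trivial and the RH form follows.
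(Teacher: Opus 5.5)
Your proposal is correct and follows the paper's route: the paper's proof just says to rerun the recursion lemma, the higher-order Fej\'er identity and the odd-tail lemma with \(L\) replaced by \(L_\alpha=\alpha L\), and that is what you do. You also correctly point out a detail the paper's one-line proof glosses over. At scale \(L_\alpha\) the constant in the \(\mathcal C_{k,\alpha}\) identity is \(-1/\alpha+O_\alpha(1/L)\) rather than \(-1\), so \(E_{k,\alpha}\) itself is no longer \(O(1/L)\). This is harmless, because only \(\mathcal C_{k,\alpha}'\) enters, with \(E_{k,\alpha}'\ll 1/(TL)\), and in the tail step the odd kernel annihilates the constant \(\log(t/2\pi)\).
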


\begin{proof}
The proof is the same as that of the full-length identity, with replaced throughout by $\alpha L.$
\end{proof}


\begin{proof}[Proof Corollary \ref{cor:shifted-imaginary} ]
Put \(L=\log T\). From the definition of \(\Im_\beta\),
\[
\Im_\beta(t)
=
\frac{2}{\beta L}
\sum_{n<T^\beta}
\frac{\Lambda(n)}{\sqrt n}
\left(
1-\frac{\log n}{\beta L}
\right)
\sin(t\log n).
\]
By the Landau--Gonek formula,
\[
\frac1{N(T)}
\sum_{0<\gamma<T}n^{i\gamma}
\sim
-\frac{\Lambda(n)}{\sqrt n\,L}
\]
in the range \(n<T^\beta\). Hence the shift by
\(-2\pi d/L\) gives
\[
\frac{1}{N(T)}
\sum_{0<\gamma<T}
\Im_\beta\!\left(
\gamma-\frac{2\pi d}{L}
\right)
\sim
\frac{2}{\beta L^2}
\sum_{n<T^\beta}
\frac{\Lambda(n)^2}{n}
\left(
1-\frac{\log n}{\beta L}
\right)
\sin\!\left(
2\pi d\frac{\log n}{L}
\right).
\]
Using
\[
\sum_{n\le x}\Lambda(n)^2\sim x\log x
\]
and partial summation, with
\[
u=\frac{\log n}{L},
\]
the last expression tends to
\[
\frac{2}{\beta}
\int_0^\beta
u\left(1-\frac{u}{\beta}\right)
\sin(2\pi du)\,du.
\]
The closed form follows by an elementary integration.
\end{proof}
\bibliographystyle{plain}

\appendix
\section{An Example of the Clustering Obstruction}
\label{app:clustering-example}

Consider the Fej\'er kernel
\[
K(z)
=
\left(\frac{\sin z}{z}\right)^2
=
\int_{-1}^{1}(1-|u|)e^{2iuz}\,du.
\]
Suppose that the off-line zeros occur in reflected pairs
\[
\rho_j^\pm
=
\frac12\pm\delta_j+i\gamma_j.
\]
We first consider the contribution obtained by summing over all ordered pairs of off-line zeros, including the diagonal terms. Substituting
\[
z=\frac{\rho-\rho'}{2i}L,
\qquad L=\log T,
\]
into the integral representation of \(K\), we obtain
\[
e^{2iuz}=e^{uL(\rho-\rho')}.
\]
Hence
\[
\sum_{\substack{\rho,\rho'\\ \text{off line}}}
K\!\left(
\frac{\rho-\rho'}{2i}L
\right)
=
\int_{-1}^{1}(1-|u|)
\left(\sum_\rho e^{uL\rho}\right)
\left(\sum_{\rho'}e^{-uL\rho'}\right)\,du.
\]

We now group each zero with its reflection. For the \(j\)-th reflected pair,
\[
e^{uL\rho_j^+}+e^{uL\rho_j^-}
=
e^{uL/2}\,
2\cosh(uL\delta_j)e^{iuL\gamma_j}.
\]
Therefore
\[
\sum_\rho e^{uL\rho}
=
e^{uL/2}
\sum_j
2\cosh(uL\delta_j)e^{iuL\gamma_j}.
\]
Similarly,
\[
\sum_{\rho'}e^{-uL\rho'}
=
e^{-uL/2}
\sum_j
2\cosh(uL\delta_j)e^{-iuL\gamma_j}.
\]
The factors \(e^{uL/2}\) and \(e^{-uL/2}\) cancel, and the two remaining sums are complex conjugates. Thus
\[
\boxed{
\sum_{\substack{\rho,\rho'\\ \text{off line}}}
K\!\left(
\frac{\rho-\rho'}{2i}L
\right)
=
\int_{-1}^{1}(1-|u|)
\left|
\sum_j
2\cosh(uL\delta_j)e^{iuL\gamma_j}
\right|^2du.
}
\]

This explains the positivity of the full off-line/off-line contribution when the diagonal terms are included.

For the unconditional Montgomery identity, however, it is useful to separate the diagonal terms. If there are \(p\) simple reflected pairs, then there are \(2p\) off-line zeros, and every diagonal term has
\[
K(0)=1.
\]
Consequently,
\[
\boxed{
\sum_{\substack{\rho\neq\rho'\\ \text{off line}}}
K\!\left(
\frac{\rho-\rho'}{2i}L
\right)
=
\int_{-1}^{1}(1-|u|)
\left|
\sum_j
2\cosh(uL\delta_j)e^{iuL\gamma_j}
\right|^2du
-
2p.
}
\]
The absolute-square representation alone only gives non-negativity before the diagonal terms are removed. The matrix formulation gives a stronger conclusion.\\

Let \(Q_{\mathrm{off}}\) denote the total matrix contribution of these \(p\) reflected off-line pairs. Then
\[
\operatorname{tr}(Q_{\mathrm{off}})=2p,
\]
and, as explained in Section~\ref{Expo},
\[
n_+(Q_{\mathrm{off}})\leq p.
\]
If its positive eigenvalues are
\[
\lambda_1,\ldots,\lambda_m,
\qquad m\leq p,
\]
then
\[
\sum_{i=1}^m\lambda_i
\geq
\operatorname{tr}(Q_{\mathrm{off}})
=
2p,
\]
since the presence of negative eigenvalues can only increase the total mass carried by the positive eigenvalues. Hence, by Cauchy--Schwarz,
\[
\operatorname{tr}(Q_{\mathrm{off}}^2)
\geq
\sum_{i=1}^m\lambda_i^2
\geq
\frac{\left(\sum_{i=1}^m\lambda_i\right)^2}{m}
\geq
4p.
\]
The quantity \(\operatorname{tr}(Q_{\mathrm{off}}^2)\) is precisely the full off-line/off-line sum considered above. Hence
\[
\boxed{
\int_{-1}^{1}(1-|u|)
\left|
\sum_j
2\cosh(uL\delta_j)e^{iuL\gamma_j}
\right|^2du
\geq
4p.
}
\]

Since the \(2p\) diagonal terms each contribute \(K(0)=1\), we obtain
\[
\boxed{
\sum_{\substack{\rho\neq\rho'\\
\rho,\rho'\ \mathrm{off\ line}}}
K\!\left(
\frac{\rho-\rho'}{2i}L
\right)
\geq
2p.
}
\]

Thus the interactions among the off-line zeros themselves are not merely non-negative in aggregate. Even after the diagonal terms are removed, they leave a positive contribution of at least \(2p\).\footnote{This lower bound follows from Claude's matrix argument. It was not available to the author in the earlier scalar analysis of the off-line/off-line contribution.}

The difficulty appears when the off-line zeros interact with zeros on the critical line. Consider one reflected pair
\[
\rho_\pm
=
\frac12\pm\delta+i\gamma
\]
and one zero
\[
\rho_0
=
\frac12+i\gamma_0
\]
on the critical line. Put
\[
x=\frac{\gamma-\gamma_0}{2}L,
\qquad
y=\frac{\delta}{2}L.
\]
There are four ordered mixed interactions,
\[
(\rho_+,\rho_0),\qquad
(\rho_0,\rho_+),\qquad
(\rho_-,\rho_0),\qquad
(\rho_0,\rho_-),
\]
whose total contribution is
\[
4\operatorname{Re}K(x+iy).
\]

Unlike the off-line/off-line contribution, this quantity need not be positive. For example, if the vertical separation is one average zero spacing,
\[
\gamma-\gamma_0
=
\frac{2\pi}{\log T},
\]
then \(x=\pi\), and
\[
4\operatorname{Re}K(\pi+iy)
=
-4\sinh^2(y)
\frac{\pi^2-y^2}{(\pi^2+y^2)^2}.
\]
Consequently, the mixed contribution is negative whenever
\[
0<y<\pi.
\]

Thus an off-line reflected pair can interact negatively with a nearby zero on the critical line. Similar negative interactions occur at other microscopic separations as well.

This is the essential difficulty. The off-line zeros themselves give a definite positive contribution, but their interactions with zeros on the critical line are not sign-definite. Moreover, if a critical-line zero has multiplicity \(m\), the corresponding mixed interaction is multiplied by \(m\). Thus clustering and multiplicity can amplify precisely those interactions which have the wrong sign.

In the unconditional scalar argument, we were not able to rule out configurations in which the accumulation of such negative mixed terms cancels enough of the positive contribution coming from the off-line zeros to interfere with Montgomery's multiplicity argument. Consequently, unlike under the Riemann Hypothesis, one cannot simply discard the off-diagonal terms and obtain an upper bound for the second moment of the multiplicities.

The matrix picture gives a more precise description of when this potential obstruction can occur. Let \(P_{\mathrm{on}}\) denote the total matrix contribution of the zeros on the critical line, and write
\[
Q_{\mathrm{off}}
=
Q_{\mathrm{off},+}
-
Q_{\mathrm{off},-},
\]
where \(Q_{\mathrm{off},+}\) and \(Q_{\mathrm{off},-}\) are the positive and negative spectral parts of the off-line matrix. Since both \(P_{\mathrm{on}}\) and \(Q_{\mathrm{off},+}\) are positive semidefinite,
\[
\operatorname{tr}
\bigl(
P_{\mathrm{on}}Q_{\mathrm{off},+}
\bigr)
\geq0.
\]
Therefore
\[
\operatorname{tr}
\bigl(
P_{\mathrm{on}}Q_{\mathrm{off}}
\bigr)
=
\operatorname{tr}
\bigl(
P_{\mathrm{on}}Q_{\mathrm{off},+}
\bigr)
-
\operatorname{tr}
\bigl(
P_{\mathrm{on}}Q_{\mathrm{off},-}
\bigr)
\]
can be negative only if
\[
Q_{\mathrm{off},-}\neq0.
\]

In particular, if the off-line pairs cluster in such a way that their negative spectrum is completely cancelled, so that
\[
Q_{\mathrm{off},-}=0,
\]
then \(Q_{\mathrm{off}}\) is positive semidefinite and the total interaction between the on-line and off-line contributions is automatically non-negative.

Thus the dangerous situation is more restricted than the existence of individual negative kernel values might suggest. For the mixed terms to create a genuine obstruction, some negative spectral mass of the collective off-line contribution must survive, and the on-line zeros must interact sufficiently strongly with these negative directions.

The issue becomes especially relevant when one tries to control multiple zeros. The diagonal term in the unconditional identity is
\[
\sum_\rho m_\rho^2,
\]
which may be substantially larger than \(N(T)\), while the negative mixed interactions are weighted by the corresponding multiplicities. To recover Montgomery's argument directly from the scalar identity, one would therefore need a sufficiently strong lower bound showing that these mixed interactions cannot compensate for the excess diagonal mass produced by multiple zeros. Such a bound was not available in the unconditional argument.

Claude's matrix method does not show that the mixed scalar terms are positive. Instead, it keeps track of the negative spectral directions responsible for such terms. If no negative spectrum survives, then the mixed contribution is automatically non-negative; if negative spectrum does survive, its spectral mass can itself be used in the trace argument. This information is lost when all pairwise interactions are collapsed into a single scalar sum.

\section{Elementary linear-algebra facts}

We record two standard facts used in section \ref{Expo}. First, if \(P\) and \(Q\) are positive semidefinite, then
\[
\operatorname{tr}(PQ)\ge 0.
\]
Indeed, diagonalizing \(P\), we may write
\[
P=\sum_i \lambda_i u_i u_i^T,
\qquad \lambda_i\ge0.
\]
Therefore
\[
\operatorname{tr}(PQ)
=
\sum_i \lambda_i\,u_i^TQu_i
\ge0,
\]
since \(Q\) is positive semidefinite.\\

Second, every real symmetric matrix \(Q\) admits a decomposition
\[
Q=Q_+-Q_-,
\]
where \(Q_+\) and \(Q_-\) are positive semidefinite and
\[
Q_+Q_-=0.
\]
Indeed, by the spectral theorem,
\[
Q=\sum_i \lambda_i u_i u_i^T
\]
for an orthonormal basis of eigenvectors \(u_i\). Define
\[
Q_+
=
\sum_{\lambda_i>0}\lambda_i u_i u_i^T,
\qquad
Q_-
=
\sum_{\lambda_i<0}(-\lambda_i)u_i u_i^T.
\]
Then both \(Q_+\) and \(Q_-\) are positive semidefinite and
\(Q=Q_+-Q_-\). Moreover, they act on orthogonal eigenspaces, so
\[
Q_+Q_-=0.
\]
\end{document}